\newcommand{\rr}{\stackrel {d}{=}}
\renewcommand{\Re}{{\rm I\kern-0.16em R}}
\def\@begintheorem#1#2{\trivlist \item[\hskip \labelsep{\bf #1\ #2}]}
\def\@opargbegintheorem#1#2#3{\trivlist
      \item[\hskip \labelsep{\bf #1\ #2\ (#3)}]}
\newtheorem{proposition}{Proposition}
\newtheorem{definition}[proposition]{Definition}
\newtheorem{corollary}[proposition]{Corollary}
\newtheorem{remark}[proposition]{Remark}
\def\R{{\bf R}}
\def\R{{\bf R}}
\def\E{{\bf E}}
\begin{document}

\author{
Terhi Kaarakka
\\{\small Tampere University of Technology,}
\\{\small Mathematical Department,}
\\{\small FIN-33101 Tampere, Finland}
\\{\small email: terhi.kaarakka@tut.fi}
\and
Paavo Salminen\\{\small {\AA}bo Akademi University}
\\{\small Mathematical Department}
\\{\small FIN-20500 {\AA}bo, Finland} 
\\{\small email: phsalmin@abo.fi}
}
\vskip5cm

\title{On fractional Ornstein-Uhlenbeck processes
\vskip1cm}

\date{}

\maketitle

\begin{abstract}
In this paper we study Doob's transform of fractional Brownian
motion (FBM). It is well known that Doob's transform of
standard Brownian motion is identical in law with the
Ornstein-Uhlenbeck diffusion defined as the solution of the (stochastic) Langevin
equation where the driving process is a Brownian motion. It is also
known that Doob's transform of FBM and the process obtained
from the Langevin equation with FBM as the driving process are
different. However, also the first one of these can be described  as
a solution of a Langevin equation but now with some other driving
process than FBM. We are mainly interested in the properties of this new
driving process denoted $Y^{(1)}.$ We also study the solution of the Langevin equation with $Y^{(1)}$ 
as the driving process. Moreover, we show that the covariance of $Y^{(1)}$ 
grows linearly; hence, in this respect  $Y^{(1)}$ is more like a standard Brownian
motion than a FBM. In fact, it is proved that a properly scaled version of  $Y^{(1)}$ 
converges weakly to Brownian motion. 
\\ \\
{\rm Keywords: fractional Brownian motion, fractional Ornstein-Uhlenbeck process, long range dependence, 
short range dependence, covariance kernel, weak convergence}
\\ \\ 
{\rm AMS Classification: 60G15, 60H05, 60G18}
\end{abstract}

\eject
\section{Introduction}
\label{sec0}
It is well known that the Ornstein-Uhlenbeck diffusion $U=\{U_t\,;\,
t\geq 0\}$ can be constructed as the unique strong solution of the Langevin SDE
\begin{equation}
\label{175} 
dU_t=-\alpha U_t\, dt+\,dB_t,
\end{equation}
where $\alpha>0$ and $B=\{B_t:t\geq 0\}$ is a standard Brownian motion initiated 
from 0. 
Solution of (\ref{175}) can be expressed as
\begin{equation}\label{e2.2}
U_t={\rm e}^{-\alpha t}\left(x+\int_0^t {\rm e}^{\alpha s}\,
dB_s\right),
\end{equation}
where $x$ is the (random) initial value of $U.$ Using partial integration,
the stochastic integral in (\ref{e2.2}) can be written as
\begin{equation}
\label{e2.3}
\int_0^s {\rm e}^{\alpha u} \,dB_u= {\rm e}^{\alpha s}B_s-\int_0^s
\alpha {\rm e}^{\alpha u} B_u\,du.
\end{equation}
The stationary distribution of $U$ is $N(0,1/2\alpha).$ 
Consequently, taking $x$ to be a normally distributed random variable with mean 0 and variance $1/(2\alpha)$ 
independent of $U$ gives us a stationary version of the
Ornstein-Uhlenbeck diffusion. 

Let $ B^{(-)} =\{ B^{(-)}_t:t\geq 0\}$ be another standard Brownian motion 
initiated from 0 and independent of $B.$ Introduce  for $t\in \R$ 
$$
\widehat B_t=
\begin{cases}B_t\,,\ & t\geq 0,\\
B^{(-)}_{-t}\,,\ & t\leq 0.\\
\end{cases}
$$
The process $\widehat B$ is sometimes called two-sided Brownian motion through 0. 
It is easily seen that 
$$
\xi:=\int_{-\infty}^0 {\rm e}^{\alpha s}\,d\widehat B_s
$$ 
is a normally distributed random variable with mean 0 and variance $1/(2\alpha).$ Since 
$
\lim_{s\to -\infty}\widehat B_s/s=0 
$
a.s., it follows via, e.g., (\ref{e2.3}) that $\xi$ is well defined. Choosing now 
$x=\xi$ allows us to write the stationary solution of (\ref{175}) in the form
\begin{equation*}
U_t={\rm e}^{-\alpha t} \int_{-\infty}^t {\rm e}^{\alpha s}\, d\widehat B_s.
\end{equation*}

There is also another well known construction of the
Ornstein-Uhlenbeck diffusion.   This is due to Doob \cite{doob42} and expresses the stationary Ornstein-Uhlenbeck diffusion 
$U$ (with time axis the whole $\R$) as a deterministic 
time change of a standard Brownian motion:
\begin{equation}
\label{161}
 U_t ={\rm e}^{-\alpha t}B_{a_t}, \quad t\in \R, 
\end{equation}
where $\alpha>0$ and 
$
a_t:= {\rm e}^{\,2\alpha\, t}/2\alpha.
$ 
The covariance of $U$ is easily obtained from~(\ref{161})
\begin{equation}
\label{1611} 
\E\left( U_t\, U_s\right)= \frac
1{2\alpha}\, {\rm e}^{-\alpha(t-s)}, \quad  t\geq s.
\end{equation}

In this note we study fractional Ornstein-Uhlenbeck processes. These are processes constructed as $U$ above but 
now the Brownian motion is replaced with the fractional Brownian motion (FBM). It is known that the process obtained as the 
solution of the Langevin SDE with FBM as the driving process does not coincide with the process obtained as 
Doob's transform of FBM. In Cheridito et al. \cite{cheriditokawaguchimaejima03} it is proved 
that the covariance of the  former one behaves like the covariance of the increment process of FBM. In particular, 
if the Hurst parameter $H$ is bigger than $1/2$ the process is long range dependent. On the other hand, the covariance 
of Doob's transform\footnote{In \cite{cheriditokawaguchimaejima03} 
this transform is called Lamperti's transform (see Lamperti \cite{lamperti72}).} of FBM decays exponentially 
and, hence, the process is short range dependent for all values 
of $H\in(0,1).$ Our main contribution in this paper is to extract from Doob's transform the driving process, to study 
its properties and use the process
in the Langevin SDE to generate new kind of fractional Ornstein-Uhlenbeck processes. 

In the next section we discuss the basic properties of FBM important for our purposes. To make the paper more readable, 
we also recall some results from 
\cite{cheriditokawaguchimaejima03}. In the main section of the paper the new driving process 
is constructed and the solution of the associated Langevin SDE is introduced. The covariance 
of the driving process and also the covariance of the solution have kernel representations in case $H>1/2.$ 
It is proved then
that the driving process and the solution are short range dependent. Moreover, it is seen that it is possible to scale 
the driving process so that it converges weakly to a Brownian motion as the scaling parameter tends to infinity. 

\section{Preliminaries}
\label{sec1}

\subsection{Fractional Brownian motion}
\label{sec11}
Let $Z=\{ Z_t :t\geq 0\}$ 
be \emph{a fractional Brownian Motion,} FBM, with self-similarity
(or Hurst) parameter $H\in ( 0,1) $, that is, $Z$ is a centered
Gaussian
 process with the covariance function
\begin{equation}
\label{11}
\E(Z_t\,Z_s)=\frac{1}{2} \left(t^{2H}+s^{2H}-|t-s|^{2H} \right).
\end{equation}
Notice that 
$$ 
\E(Z^2_0)=0\ \text{ and }\ \E(Z^2_1)=1,
$$ 
and, hence, in
particular $Z_0=0$. Using Kolmogorov's continuity criterion it can
be proved that $Z$ has a continuous version; therefore, we take $Z$
to be continuous. In fact, $Z$ is locally H\"older continuous of
exponent $\alpha$ for all $\alpha<H$.

Fractional Brownian motion is $H$-self-similar in the sense
\begin{equation}
\label{12}
\{ Z_{\alpha t} :t\geq 0\}\ \rr\,\{ \alpha^H\,Z_{t} :t\geq 0\}
\quad{\rm for\  all\ } \alpha>0,
\end{equation}
where $\rr$ means that the right hand side and the left hand side are identical in law.
This follows from (\ref{11}) because the covariance function determines a mean zero Gaussian
distribution uniquely. Moreover, from (\ref{11}), for  ${t_2}>{t_1}>{s_2}>{s_1}$
\begin{eqnarray}
\label{136}
&&\hskip-1cm
\nonumber
\E\left((Z_{t_2}-Z_{t_1})(Z_{s_2}-Z_{s_1})\right)
\\
&&
=\frac 12\left((t_2-s_1)^{2H}-(t_1-s_1)^{2H}-(t_2-s_2)^{2H}+(t_1-s_2)^{2H}\right).
\end{eqnarray}
Since the function
$$
s\mapsto (t_2-s)^{2H}-(t_1-s)^{2H},\ s<t_1<t_2,
$$
is decreasing for $H>1/2,$ and increasing for $H<1/2$ it follows that the increments of $Z$ are
\begin{description}
\item{$\bullet$}\quad positively correlated if $H>1/2,$
\item{$\bullet$}\quad  negatively correlated if $H<1/2.$
\end{description}
Consider now the increment process of $Z$ defined as
$$
I_Z:=\{Z_{n+1}-Z_{n}\,:\,n=0,1,2,\dots \}.
$$
It is easily seen that $I_Z$ is a stationary second order stochastic
process
and, from (\ref{136}),
\begin{equation}
\label{infbm}
\rho_{I_{Z}}(n):=\E\left(Z_1(Z_{n+1}-Z_n)\right)=H(2H-1)n^{-2(1-H)}+
O(n^{2H-3}).
\end{equation}
Next we recall the following definition (see Beran \cite{beran94} p. 6 and 42).
\begin{definition}\label{lrd}
Let $X=\{X_n: n=0,1,2,\dots\}$ be a stationary second order
stochastic process with mean zero and set
$
\rho_X(n):=\E\left(X_{i}X_{i+n} \right), 
$ 
where $i$ is arbitrary non-negative integer (by stationarity, $\rho_X(n)$ does not depend on~$i$). 
Then $X$ is called
\begin{description}
\item{$(i)$}\quad  \emph{long range dependent} if there exist $\alpha\in(0,1)$ and a constant $C>0$ such that
$\
\lim_{n\to\infty} \rho_X(n)/(C\,n^{-\alpha})=1,
$
\item{$(ii)$}\quad  \emph{short range dependent} if 
$\
\lim_{k\to\infty}\sum_{n=0}^k \rho_X(n)
$ 
exists.
\end{description}
\end{definition}
\noindent
From Definition \ref{lrd} and formula (\ref{infbm}) it follows that the increment process $I_Z$ of the 
fractional Brownian  motion $Z$ is
\begin{description}
\item{$\bullet$}\quad  long range dependent if $H>1/2,$ 
\item{$\bullet$}\quad  short range dependent if $H<1/2.$ 
\end{description}
Notice that, since $Z_0=0,$ we have
\begin{eqnarray}
\label{15} \lim_{N\to\infty}\E\left(Z_NZ_{1}\right)=
\sum_{n=0}^\infty \rho_{I_{Z}}(n).
\end{eqnarray}

\subsection{Fractional Ornstein-Uhlenbeck processes of the first kind}
\label{sec12}

We replace now the Brownian motion $B$ in (\ref{175}) with the fractional Brownian motion $Z,$ 
and consider the SDE 
\begin{equation}
\label{121e} dU^{(Z,\alpha)}_t=-\alpha U^{(Z,\alpha)}_t\, dt+dZ_t.
\end{equation}
Analogously with (\ref{e2.2}), the solution can be expressed as
\begin{equation}
\label{e2.5}
U^{(Z,\alpha)}_t(x)={\rm e}^{-\alpha t}\left(x+\int_0^t {\rm e}^{\alpha
s}\, dZ_s\right)
\end{equation}
with some (random) initial value $x.$ The stochastic integral exists pathwise as a Riemann-Stiltjes integral 
(see Cheridito et al. \cite{cheriditokawaguchimaejima03}) and it holds
\begin{equation}\label{e2.6}
\int_0^s {\rm e}^{\alpha u} \,dZ_u= {\rm e}^{\alpha s}Z_s-\int_0^s
\alpha {\rm e}^{\alpha u} Z_u\,du.
\end{equation}
Furthermore, we introduce $\widehat Z,$ two-sided fractional Brownian motion through 0, 
and consider
\begin{equation}\label{e2.7}
\xi:=\int_{-\infty}^0
{\rm e}^{\alpha s}\, d\widehat Z_s.
\end{equation} 
Recall that the process given by
$$
Z^{(o)}_t:=
\begin{cases}0\,,\ & t=0,\\
t^{2H}Z_{1/t}\,,\ & t> 0.\\
\end{cases}
$$
is again a fractional Brownian motion. Therefore,
$$
\lim_{s\to -\infty} \widehat Z_s/|s|^{2H}=0\quad {\rm a.s.},
$$
and, consequently, $\xi$ is well-defined (via (\ref{e2.6})). Taking in (\ref{e2.5}) $x=\xi$ we write 
the solution in the form 
\begin{equation}
\label{e2.51}
U^{(Z,\alpha)}_t={\rm e}^{-\alpha t}\,\int_{-\infty}^t {\rm e}^{\alpha
s}\, d\widehat Z_s.
\end{equation}
Since the increments of $Z$ are stationary and the stochastic integral is a Riemann-Stiltjes integral it 
follows that the process $U^{(Z,\alpha)}$ is stationary. The stationary probability distribution, i.e., 
the distribution of $\xi,$ is normal with mean 0 and variance (see Cheridito et al. \cite{cheriditokawaguchimaejima03})
$$
\frac{\Gamma(2H+1)\sin(\pi H)}{\pi}\,\alpha^{-2H}\,\int_{0}^{+\infty}\frac{|x|^{1-2H}}{1+x^2}\,dx.
$$        
In case $H=1/2$, the variance equals $1/2\alpha,$ as it should. 
  
\begin{definition}
\label{d2} The process $U^{(Z,\alpha)}$ given in (\ref{e2.51}) 
is called the \emph{stationary
fractional Ornstein-Uhlenbeck process of the first
kind}.
\end{definition}

Next we recall the asymptotic formula for the covariance of $U^{(Z,\alpha)}$
taken from \cite{cheriditokawaguchimaejima03} Theorem 2.3, which is then applied to derive 
the range dependence properties of $U^{(Z,\alpha)}.$

\begin{proposition}\label{pr2.1}
Let $H\in (0,\frac{1}{2}) \cup (\frac{1}{2},1]$ and $N=1,2,\ldots$.
Then for fixed $s\in \mathbb{R}$ and $t \to \infty$,
\begin{eqnarray}
\label{e2.8}
&&\nonumber
\hskip-1cm
\E(U^{(Z,\alpha)}_{s}U^{(Z,\alpha)}_{t+s})
\\
&&\hskip.8cm
=\frac{1}{2} \sum_{n=1}^{N} \alpha^{-2n}
\left(\prod_{k=0}^{2n-1}(2H-k)\right)t^{2H-2n}+ O(t^{2H-2N-2}).
\end{eqnarray}
\end{proposition}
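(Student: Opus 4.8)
The plan is to reduce the statement to the asymptotics of one deterministic double integral. By stationarity of $U^{(Z,\alpha)}$ it suffices to analyse $r(t):=\E\big(U^{(Z,\alpha)}_0\,U^{(Z,\alpha)}_{t}\big)$ as $t\to\infty$. Starting from the representation~(\ref{e2.51}) and the integration-by-parts identity~(\ref{e2.6}) (now with $\widehat Z$), and using $\int_0^\infty\alpha\,{\rm e}^{-\alpha u}\,du=1$, I would first rewrite $U^{(Z,\alpha)}$ as a moving average of increments of $\widehat Z$:
\begin{equation*}
U^{(Z,\alpha)}_t=\int_0^\infty \alpha\,{\rm e}^{-\alpha u}\,\big(\widehat Z_t-\widehat Z_{t-u}\big)\,du .
\end{equation*}
The advantage of this form is that it contains no singular kernel, so the ensuing computation is uniform over $H\in(0,\tfrac12)\cup(\tfrac12,1]$; the integral converges in $L^2$ since $\|\widehat Z_t-\widehat Z_{t-u}\|_2=u^H$.

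Next I would compute $r(t)$ explicitly. Applying Fubini and the covariance of the increments of $\widehat Z$, namely $\E\big((\widehat Z_a-\widehat Z_b)(\widehat Z_c-\widehat Z_d)\big)=\tfrac12\big(|a-d|^{2H}+|b-c|^{2H}-|a-c|^{2H}-|b-d|^{2H}\big)$, gives
\begin{equation*}
r(t)=\frac12\int_0^\infty\!\!\int_0^\infty \alpha^2{\rm e}^{-\alpha(u+v)}\Big(|t-v|^{2H}+(t+u)^{2H}-t^{2H}-|t+u-v|^{2H}\Big)\,du\,dv .
\end{equation*}
For the asymptotics I would split the domain: on $\{u>t/2\}\cup\{v>t/2\}$ the exponential factor makes the contribution $O\big({\rm e}^{-\alpha t/2}\,\mathrm{poly}(t)\big)$, which is swallowed by the error term; on the complement $t-v,\ t+u,\ t+u-v$ are all $\ge t/2>0$, so I may Taylor expand $(t+w)^{2H}=\sum_{m=0}^{2N+1}\binom{2H}{m}w^m t^{2H-m}+R_N(t,w)$ with Lagrange remainder $|R_N(t,w)|\le C\,|w|^{2N+2}\,t^{2H-2N-2}$ uniformly for $|w|\le t/2$, with $w\in\{-v,\,u,\,u-v\}$. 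The $m=0$ contributions cancel against $-t^{2H}$, and integrating term by term against $\alpha^2{\rm e}^{-\alpha(u+v)}$ (i.e.\ taking moments of independent ${\rm Exp}(\alpha)$ variables $U,V$) yields
\begin{equation*}
r(t)=\frac12\sum_{m=1}^{2N}\binom{2H}{m}t^{2H-m}\,\E\big((-V)^m+U^m-(U-V)^m\big)+O\big(t^{2H-2N-2}\big).
\end{equation*}

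Finally I would evaluate the coefficients. By the symmetry $(u,v)\mapsto(v,u)$ the quantity $(-V)^m+U^m-(U-V)^m$ has expectation $0$ for odd $m$, while for $m=2n$ a short computation using $\E U^{j}=j!\,\alpha^{-j}$ and $\sum_{i=0}^{m}(-1)^i=\mathbf 1_{\{m\ \mathrm{even}\}}$ gives $\E\big((-V)^{2n}+U^{2n}-(U-V)^{2n}\big)=(2n)!\,\alpha^{-2n}$. Hence only even powers of $t$ remain, the coefficient of $t^{2H-2n}$ being $\tfrac12\binom{2H}{2n}(2n)!\,\alpha^{-2n}=\tfrac12\,\alpha^{-2n}\prod_{k=0}^{2n-1}(2H-k)$, which is precisely~(\ref{e2.8}). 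The step needing genuine care is the last part of the asymptotic estimate: controlling the Taylor remainder uniformly in $w$ and handling the domain split so that the terms $|t-v|^{2H}$ and $|t+u-v|^{2H}$ (which are not simply powers of a positive number over the whole domain) cause no trouble; the rest is bookkeeping. For $H>1/2$ one could instead shortcut the first two steps by using the FBM covariance identity $\E\big(\int f\,d\widehat Z\int g\,d\widehat Z\big)=H(2H-1)\int\!\!\int f(x)g(y)|x-y|^{2H-2}\,dx\,dy$ to obtain directly $r(t)=H(2H-1)\int_0^\infty\!\int_0^\infty {\rm e}^{-\alpha(x+y)}|t+x-y|^{2H-2}\,dx\,dy$ and then run the same expansion, but the moving-average computation has the merit of covering $H<1/2$ at no extra cost.
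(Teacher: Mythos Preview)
The paper does not prove this proposition at all: it is quoted verbatim as Theorem~2.3 of Cheridito, Kawaguchi and Maejima \cite{cheriditokawaguchimaejima03} and used as a black box. Your write-up therefore supplies a self-contained argument where the paper has none.

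Your approach is correct. The moving-average identity
\[
U^{(Z,\alpha)}_t=\int_0^\infty \alpha\,{\rm e}^{-\alpha u}\big(\widehat Z_t-\widehat Z_{t-u}\big)\,du
\]
follows cleanly from~(\ref{e2.51}) and~(\ref{e2.6}), and it avoids the kernel $|x-y|^{2H-2}$ so that the case $H<\tfrac12$ needs no separate treatment. The Taylor expansion of $(t+w)^{2H}$ together with the exponential tail cut at $t/2$ is the right mechanism, and your evaluation of the moment combination is accurate: for odd $m$ the symmetrisation $(u,v)\mapsto(v,u)$ kills the term, and for $m=2n$ the identity $\E\big[(U-V)^{2n}\big]=(2n)!\,\alpha^{-2n}$ gives exactly the claimed coefficient $\tfrac12\,\alpha^{-2n}\prod_{k=0}^{2n-1}(2H-k)$.

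Two small points of presentation. First, to land on the error $O(t^{2H-2N-2})$ you must carry the Taylor expansion through order $m=2N+1$ (as you in fact wrote), so the displayed finite sum should run to $m=2N+1$ before you invoke the parity argument; the $m=2N+1$ term then vanishes and only $n=1,\dots,N$ survive. Second, the Fubini step producing the double-integral expression for $r(t)$ deserves one explicit line of justification (e.g.\ bound the integrand by $\alpha^2{\rm e}^{-\alpha(u+v)}\,C\,(u^{2H}+v^{2H}+t^{2H})$ via the elementary inequality $|a+b|^{2H}\le C(|a|^{2H}+|b|^{2H})$); this is routine but worth stating. Neither point is a genuine gap.
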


\begin{proposition}
\label{pr2.2}
The stationary sequence $\{U^{(Z,\alpha)}_{n}:n=1,2,\dots\}$
(and, equivalently, the process $U^{(Z,\alpha)}$) is long range dependent
when $H>1/2$, and short range dependent when $H<1/2$.
\end{proposition}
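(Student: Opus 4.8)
\noindent\emph{Proof (plan).}
The plan is to read off the leading-order behaviour of the covariance of the stationary sequence $\{U^{(Z,\alpha)}_n\}$ from Proposition~\ref{pr2.1} and to match it against the two alternatives in Definition~\ref{lrd}. Write $\rho(n):=\E(U^{(Z,\alpha)}_i\,U^{(Z,\alpha)}_{i+n})$; by stationarity this does not depend on $i$, and choosing $i=1$ and applying Proposition~\ref{pr2.1} with $s=1$, $t=n$, $N=1$ yields
\begin{equation*}
\rho(n)=\frac{1}{2}\,\alpha^{-2}\left(\prod_{k=0}^{1}(2H-k)\right)n^{2H-2}+O(n^{2H-4})=\frac{H(2H-1)}{\alpha^{2}}\,n^{2H-2}+O(n^{2H-4}).
\end{equation*}

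For $H\in(1/2,1)$ I would put $\beta:=2-2H\in(0,1)$ and $C:=H(2H-1)/\alpha^{2}$, noting that $C>0$ precisely because $H>1/2$. The expansion above then reads $\rho(n)=C\,n^{-\beta}(1+O(n^{-2}))$, so $\lim_{n\to\infty}\rho(n)/(C\,n^{-\beta})=1$ with $\beta\in(0,1)$ and $C>0$. This is exactly condition~(i) of Definition~\ref{lrd} (with the exponent called $\alpha$ there being our $\beta$), so the sequence is long range dependent; as a consistency check, $\beta<1$ forces $\sum_n\rho(n)=\infty$, so it cannot be short range dependent.

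For $H\in(0,1/2)$ the exponent $2H-2$ is strictly less than $-1$, so already the estimate $\E(U^{(Z,\alpha)}_sU^{(Z,\alpha)}_{t+s})=O(t^{2H-2})$ contained in Proposition~\ref{pr2.1} gives a constant $K$ and an index $n_0$ with $|\rho(n)|\le K\,n^{2H-2}$ for $n\ge n_0$. Since $\sum_{n\ge1}n^{2H-2}<\infty$, the series $\sum_{n=0}^{\infty}\rho(n)$ converges absolutely (the term $\rho(0)$ being the finite stationary variance), hence $\lim_{k\to\infty}\sum_{n=0}^{k}\rho(n)$ exists, which is condition~(ii) of Definition~\ref{lrd}. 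Thus the sequence is short range dependent.

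I do not anticipate a genuine obstacle: the substantive analytic work is carried by Proposition~\ref{pr2.1}, which we take as given, and the remainder is bookkeeping. The two points that need care are the \emph{sign} of the leading coefficient $\frac{1}{2}\,\alpha^{-2}\prod_{k=0}^{1}(2H-k)=H(2H-1)/\alpha^{2}$, which has to be positive for Definition~\ref{lrd}(i) to apply and is so exactly when $H>1/2$, and the \emph{value} of the decay exponent, which falls in the admissible range $(0,1)$ precisely for $H\in(1/2,1)$ and drops below $-1$ (making the covariances summable) precisely for $H<1/2$. The borderline value $H=1/2$ is excluded --- there $U^{(Z,\alpha)}$ is the classical Ornstein--Uhlenbeck diffusion, whose covariance decays exponentially by~(\ref{1611}) --- and $H=1$ is degenerate.
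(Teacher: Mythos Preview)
Your proposal is correct and follows essentially the same route as the paper: both extract the leading term of order $t^{2H-2}$ from Proposition~\ref{pr2.1} and feed it into Definition~\ref{lrd}. Your version is in fact more careful than the paper's, since you explicitly check the sign of the leading coefficient $H(2H-1)/\alpha^2$ and verify that the decay exponent $2-2H$ lies in $(0,1)$ for $H>1/2$, exactly what Definition~\ref{lrd}(i) requires.
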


\begin{proof}
Leading term of the sum in (\ref{e2.8}) is of the
order $t^{2H-2}$. Consequently,
\begin{eqnarray*}
\sum_{n=0}^{\infty}|\rho_{U^{(Z,\alpha)}}(n)|
&=&\sum_{n=0}^{\infty}|\E(U^{(Z,\alpha)}_{i}U^{(Z,\alpha)}_{i+n})|\\
&\simeq &\sum_{n=0}^{\infty} n^{2H-2},
\end{eqnarray*}
which, by Definition \ref{lrd}, gives the claim. 
\end{proof}

\section{Fractional Ornstein-Uhlenbeck processes of the second kind}
\label{sec3}
\subsection{Definition and some basic properties}
\label{sec31}
In this section we derive from Doob's transform of $Z$ a Gaussian process with stationary increments. This process is 
used as the driving process 
in the Langevin SDE. In this way we construct a new family of Gaussian processes which we call fractional Ornstein-Uhlenbeck 
processes of the second kind. This terminology can be justified by observing that in the standard Brownian case, 
i.e., $H=1/2,$  these processes coincide with the Ornstein-Uhlenbeck diffusions; as also do the fractional 
Ornstein-Uhlenbeck processes of the first kind introduced in Definition \ref{d2}.

Doob's transform of $Z$ is the process given by 
\begin{equation}
\label{16}
X^{(D,\alpha)}_t :={\rm e}^{-\alpha t}Z_{a_t}, \quad t\in \R, 
\end{equation}
where $\alpha>0$ and 
$
a_t:=a(t,H):= H\,{\rm e}^{\,\alpha t/H}/\alpha.
$
The covariance of $X$ can be computed from (\ref{11}). Indeed, for $t>s$ we have 
\begin{eqnarray}
\label{165} && \hskip-.5cm
\E(X^{(D,\alpha)}_tX^{(D,\alpha)}_s)\\
&&\hskip.5cm \nonumber = \frac{1}{2} \left(\frac
H\alpha\right)^{2H}\left( {\rm e}^{\alpha(t-s)}+{\rm
e}^{-\alpha(t-s)}-{\rm e}^{\alpha(t-s)}\left(1-{\rm
e}^{-\frac{\alpha(t-s)}{H}}\right)^{2H} \right).
\end{eqnarray}
Since $X^{(D,\alpha)}$ is a Gaussian process it follows herefrom that it is stationary. 
In particular, using the self-similarity property  of the fractional Brownian motion (see (\ref{12})) 
it is seen that  $X^{(D,\alpha)}_t$ is for
all $t$ normally distributed with mean 0 and variance
 $(H/\alpha)^{2H}.$ 
\begin{proposition}
\label{prop00}
The stationary process $\{X^{(D,\alpha)}_t:t\in\R\}$ is, for all $H\in(0,1),$ short range dependent.
\end{proposition}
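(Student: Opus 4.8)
The plan is to apply Definition \ref{lrd}(ii) directly. It suffices to show that the covariance sequence $\rho_{X^{(D,\alpha)}}(n):=\E\big(X^{(D,\alpha)}_i X^{(D,\alpha)}_{i+n}\big)$ decays exponentially fast in the lag $n$; then the partial sums $\sum_{n=0}^{k}\rho_{X^{(D,\alpha)}}(n)$ converge as $k\to\infty$ and the process is short range dependent by definition. By the stationarity noted right after (\ref{165}), together with (\ref{165}) itself, the covariance at a positive lag $\tau$ equals
$$
\rho_{X^{(D,\alpha)}}(\tau)=\frac12\Big(\frac H\alpha\Big)^{2H}\Big({\rm e}^{-\alpha\tau}+{\rm e}^{\alpha\tau}\big(1-(1-{\rm e}^{-\alpha\tau/H})^{2H}\big)\Big),
$$
so the whole question reduces to showing that the right-hand side is $O({\rm e}^{-c\tau})$ for some $c>0$. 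The one delicate feature is that the apparent exponential growth ${\rm e}^{\alpha\tau}$ has to be cancelled by the $1$ sitting inside the bracket.

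To make that cancellation quantitative I would use the elementary inequality
$$
0\le 1-(1-x)^{r}\le\max(r,1)\,x,\qquad x\in[0,1],\ r>0 .
$$
For $r\ge1$ this is immediate from $1-(1-x)^{r}=\int_0^x r(1-u)^{r-1}\,du$ and $(1-u)^{r-1}\le1$; for $0<r<1$ it follows from $(1-x)^{r}\ge1-x$, which is just $y^{r}\ge y$ with $y=1-x\in[0,1]$. Applying it with $x={\rm e}^{-\alpha\tau/H}$ and $r=2H$ gives
$$
0\le {\rm e}^{\alpha\tau}\big(1-(1-{\rm e}^{-\alpha\tau/H})^{2H}\big)\le\max(2H,1)\,{\rm e}^{\alpha\tau-\alpha\tau/H}=\max(2H,1)\,{\rm e}^{-\alpha\tau(1-H)/H}.
$$
Since $H\in(0,1)$ the numbers $-\alpha$ and $-\alpha(1-H)/H$ are both strictly negative, so I obtain $0\le\rho_{X^{(D,\alpha)}}(\tau)\le C\,{\rm e}^{-c\tau}$ for all $\tau>0$, with $C=\tfrac12(H/\alpha)^{2H}\big(1+\max(2H,1)\big)$ and $c=\alpha\min\{1,(1-H)/H\}>0$.

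The conclusion is then routine: since $\rho_{X^{(D,\alpha)}}(0)=(H/\alpha)^{2H}$ is finite, $\sum_{n=0}^{\infty}\rho_{X^{(D,\alpha)}}(n)\le (H/\alpha)^{2H}+C\sum_{n\ge1}{\rm e}^{-cn}<\infty$, so the limit in Definition \ref{lrd}(ii) exists, and this holds for every $H\in(0,1)$ — which is exactly the assertion. I expect the only mildly technical point to be the two-case check of $1-(1-x)^{r}\le\max(r,1)x$ (the ranges $2H\ge1$ and $2H<1$ behave slightly differently); everything after that is a single geometric-series estimate. As an alternative route for $H>1/2$ one could instead read off the exponential decay from the kernel/asymptotic representation of the covariance developed later in this section, but the direct estimate above is self-contained and disposes of $H\le1/2$ at the same time.
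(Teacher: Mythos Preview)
Your proof is correct and follows the same route as the paper: both derive from formula (\ref{165}) the exponential decay
\[
\rho_{X^{(D,\alpha)}}(\tau)=O\big(\exp(-\alpha\min\{1,(1-H)/H\}\,\tau)\big),
\]
and conclude short range dependence from this. The paper simply asserts this estimate without justification, whereas you supply the missing step via the elementary bound $1-(1-x)^{r}\le\max(r,1)\,x$; your argument is thus a fleshed-out version of the paper's one-line proof.
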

\begin{proof}
Formula (\ref{165}) yields for a fixed $s$ as  $t\to\infty$
\begin{equation}
\label{166a} \E(X^{(D,\alpha)}_tX^{(D,\alpha)}_s)=O
\left(\exp\left(-\alpha\, \min\{1,(1-H)/H\}\,t\right)\right),
\end{equation}
and this implies the result.
\end{proof}
Consider now the process $Y^{(\alpha)}$ defined via
\begin{equation}
\label{1665} 
Y^{(\alpha)}_t:= \int_0^t {\rm e}^{-\alpha s}dZ_{a_s},
\end{equation}
where the integral is a (pathwise) Riemann-Stiltjes integral (cf. Section \ref{sec12}).
In case $H=1/2$, $Y^{(\alpha)}$ is, for all $\alpha,$ by L\'evy's theorem a standard Brownian motion.
Using $Y^{(\alpha)}$ the process $X^{(D,\alpha)}$ can be viewed as the solution of the equation 
\begin{equation}
\label{16652}
dX^{(D,\alpha)}_t=-\alpha X^{(D,\alpha)}_tdt+dY^{(\alpha)}_t.
\end{equation}
\begin{proposition}
\label{prop01}
For all  $\alpha>0$
\begin{equation}
\label{1667}
\{ \alpha^H Y^{(\alpha)}_{t/\alpha} :t\geq 0\}\ \rr\,\{ Y^{(1)}_{t} :t\geq 0\}.
\end{equation}
The process $Y^{(\alpha)}$ has stationary increments. 
\end{proposition}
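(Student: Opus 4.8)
The plan is to deduce both assertions from the $H$-self-similarity of $Z$ (formula~(\ref{12})) combined with the integration-by-parts representation of the pathwise Riemann--Stieltjes integral in~(\ref{1665}). First I would record, exactly as in~(\ref{e2.6}), that since $s\mapsto{\rm e}^{-\alpha s}$ is continuously differentiable and $s\mapsto Z_{a_s}$ is continuous,
\[
Y^{(\alpha)}_t={\rm e}^{-\alpha t}Z_{a_t}-Z_{a_0}+\alpha\int_0^t{\rm e}^{-\alpha s}Z_{a_s}\,ds .
\]
The purpose of this identity is to exhibit $Y^{(\alpha)}$, on any bounded interval, as $F\big(\{Z_{a_s}\}_{s\ge0}\big)$, where $F(\ph)(t):={\rm e}^{-\alpha t}\ph(t)-\ph(0)+\alpha\int_0^t{\rm e}^{-\alpha s}\ph(s)\,ds$ is a fixed continuous linear functional on $C([0,\infty))$ with the topology of uniform convergence on compacts. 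Consequently, if two continuous processes indexed by $s$ coincide in law on path space, then so do their $F$-images; this is what lets one transport equalities in law through the stochastic integral even though $Z$ has unbounded variation.

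For the scaling relation~(\ref{1667}) I would change variables $s=u/\alpha$ in~(\ref{1665}) and in the displayed identity. Writing $\tilde a_u:=H{\rm e}^{u/H}$ for the time change belonging to the parameter value $1$, one has $a_{u/\alpha}=H{\rm e}^{u/H}/\alpha=\tilde a_u/\alpha$, hence
\[
Y^{(\alpha)}_{t/\alpha}={\rm e}^{-t}Z_{\tilde a_t/\alpha}-Z_{\tilde a_0/\alpha}+\int_0^t{\rm e}^{-u}Z_{\tilde a_u/\alpha}\,du=F_1\big(\{Z_{\tilde a_u/\alpha}\}_{u\ge0}\big)(t),
\]
where $F_1$ is $F$ with $\alpha=1$, so that $F_1\big(\{Z_{\tilde a_u}\}_{u\ge0}\big)=Y^{(1)}$. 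By~(\ref{12}) with scaling constant $1/\alpha$ we have $\{Z_{r/\alpha}:r\ge0\}\rr\{\alpha^{-H}Z_r:r\ge0\}$ on $C([0,\infty))$; composing both sides with the deterministic continuous increasing map $u\mapsto\tilde a_u$ gives $\{Z_{\tilde a_u/\alpha}:u\ge0\}\rr\{\alpha^{-H}Z_{\tilde a_u}:u\ge0\}$. Applying $F_1$ and using its linearity yields $\{Y^{(\alpha)}_{t/\alpha}:t\ge0\}\rr\{\alpha^{-H}Y^{(1)}_t:t\ge0\}$, i.e.~(\ref{1667}).

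For the stationarity of the increments, fix $h\ge0$, substitute $s=h+v$ in $Y^{(\alpha)}_{t+h}-Y^{(\alpha)}_h=\int_h^{t+h}{\rm e}^{-\alpha s}\,dZ_{a_s}$, and use $a_{h+v}={\rm e}^{\alpha h/H}a_v$; after factoring out ${\rm e}^{-\alpha h}$ this expresses $Y^{(\alpha)}_{t+h}-Y^{(\alpha)}_h$ as ${\rm e}^{-\alpha h}F\big(\{Z_{{\rm e}^{\alpha h/H}a_v}\}_{v\ge0}\big)$. Self-similarity~(\ref{12}) with scaling constant ${\rm e}^{\alpha h/H}$ gives $\{Z_{{\rm e}^{\alpha h/H}r}:r\ge0\}\rr\{{\rm e}^{\alpha h}Z_r:r\ge0\}$, hence, composing with $v\mapsto a_v$, $\{Z_{{\rm e}^{\alpha h/H}a_v}:v\ge0\}\rr\{{\rm e}^{\alpha h}Z_{a_v}:v\ge0\}$; applying $F$ and using linearity, the factor ${\rm e}^{\alpha h}$ cancels the prefactor ${\rm e}^{-\alpha h}$ and one gets $\{Y^{(\alpha)}_{t+h}-Y^{(\alpha)}_h:t\ge0\}\rr\{Y^{(\alpha)}_t:t\ge0\}$, which is stationarity of increments. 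The step I expect to need the most care is precisely the passage from an equality in law of the (time-changed, rescaled) driving paths to an equality in law of the pathwise Riemann--Stieltjes integrals as processes: going through the integration-by-parts formula~(\ref{e2.6}) to realise $Y^{(\alpha)}$ as a continuous functional of a continuous path is what makes this legitimate, and one must keep all equalities in law at the level of whole processes (all finite-dimensional distributions) throughout, so that~(\ref{1667}) and increment stationarity hold as process identities rather than only for one-dimensional marginals.
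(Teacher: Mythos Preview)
Your proof is correct and follows essentially the same approach as the paper: integration by parts to write $Y^{(\alpha)}$ as a continuous linear functional of the path $\{Z_{a_s}\}$, followed by an application of the self-similarity of $Z$. The only minor difference is that for stationarity the paper checks the shift-invariance of the covariance (sufficient since the process is centered Gaussian), whereas you argue directly at the level of process laws; your version is more explicit about why the equality in law passes through the pathwise integral, but the underlying argument is the same.
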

\begin{proof}
Integrating by parts we obtain
\begin{equation}
\label{16651} 
Y^{(\alpha)}_t= \int_0^t {\rm e}^{-\alpha s}dZ_{a_s}= {\rm e}^{-\alpha t}\,Z_{a_t}-Z_{a_0}+ 
\alpha\int_0^t {\rm e}^{-\alpha s}\,Z_{a_s}\,ds
\end{equation}
Using (\ref{12})  -- the self-similary property of FBM -- the claimed identity in law (\ref{1667}) follows from (\ref{16651}). 
Moreover, the equality 
$$
\E\left(\left(Y^{(\alpha)}_{t_2}-Y^{(\alpha)}_{t_1}\right)\left(Y^{(\alpha)}_{s_2}-Y^{(\alpha)}_{s_1}\right)\right)
=
\E\left(\left(Y^{(\alpha)}_{t_2+h}-Y^{(\alpha)}_{t_1+h}\right)\left(Y^{(\alpha)}_{s_2+h}-Y^{(\alpha)}_{s_1+h}\right)\right)
$$
holds for $t_2>t_1>s_2>s_1>0$ and $h>0$ again by 
the self similarity of FBM 
and exploiting (\ref{16651}). Consequently, the increments of $Y^{(\alpha)}$ are stationary.
\end{proof}

Inspired by Proposition \ref{prop01}, we consider the Langevin
SDE with $Y^{(1)}$ as the driving process:
\begin{equation}
\label{1668} 
dU^{(D,\gamma)}_t= -\gamma U^{(D,\gamma)}_tdt + d Y^{(1)}_t,\quad \gamma> 0.
\end{equation}
The solution can be expressed (cf. (\ref{e2.51}))
\begin{eqnarray}
\label{1669}
&&\nonumber\hskip-.3cm
U^{(D,\gamma)}_t={\rm e}^{-\gamma t}\,\int_{-\infty}^t {\rm e}^{\gamma
s}\, d\widehat Y^{(1)}_s\\
&& \hskip.9cm
={\rm e}^{-\gamma t}\,\int_{-\infty}^t {\rm e}^{(\gamma-1)
s}dZ_{a_s},\quad \gamma>0,
\end{eqnarray}
where $\widehat Y^{(1)}$ stands for the two sided $Y^{(1)}$ process. To show that the stochastic integral term makes sense also 
for $\gamma\in (0,1]$ recall first that for all $\beta<H$ 
\begin{equation}
\label{1670}
\lim_{s\to 0} Z_s/|s|^\beta =0\quad {\rm a.s.}
\end{equation}
because $Z$ is H\"older continuous of order $\beta<H.$ Next for $T<0$ using partial integration 
$$
\int_T^s {\rm e}^{(\gamma -1) u} \,dZ_{a_u}= {\rm e}^{(\gamma -1) s}Z_{a_s}-
{\rm e}^{(\gamma -1) T}Z_{a_T}-
(\gamma-1)\int_T^s
{\rm e}^{(\gamma-1) u} Z_{a_u}\,du,
$$
and by (\ref{1670}) the right hand side has a well defined limit as $T\to -\infty.$

Since the increment process of $Y^{(1)}$ is 
stationary it follows that $U^{(D,\gamma)}$ is stationary and,
therefore, we have well justified the following
\begin{definition}
\label{d1} The process $U^{(D,\gamma)}$ defined in (\ref{1669}) or, equivalently,
via the SDE (\ref{1668}) is called {\rm the fractional Ornstein-Uhlenbeck
process of the second kind.}
\end{definition}

We conclude this section by characterizing the H\"older continuity of $Y^{(\alpha)}$ and $U^{(D,\gamma)}.$ The result 
holds for more general stochastic integrals with respect to $Z$ (see Z\"ahle \cite{zahle98}), but the following simple proof 
in our special case is perhaps worthwhile to present here.
\begin{proposition}
\label{prop0}
The sample paths of $Y^{(\alpha)}$ and $U^{(D,\gamma)}$ are (locally) H\"older continuous of order $\beta <H.$
\end{proposition}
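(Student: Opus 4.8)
The plan is to exploit the integration-by-parts representations already derived for the two processes and then invoke only elementary stability properties of local H\"older continuity: composition with a locally Lipschitz time change, multiplication by a smooth (hence locally Lipschitz) factor, primitivation of a continuous integrand, and finite sums. The single probabilistic input is the fact recalled in Section~\ref{sec11} that $Z$ is locally H\"older continuous of every order $\beta<H$, together with (\ref{1670}) near $0$. I would first record two deterministic observations, used repeatedly. First, $t\mapsto a_t$ is real-analytic, in particular locally Lipschitz, and maps every compact subset of $\R$ into a compact subset of $(0,\infty)$; hence $t\mapsto Z_{a_t}$ is locally H\"older of order $\beta$ for every $\beta<H$, being the composition of a locally H\"older-$\beta$ function with a locally Lipschitz one. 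Second, on a bounded interval the product of two bounded H\"older-$\beta$ functions is again H\"older-$\beta$, and a locally Lipschitz function is locally H\"older of every order $\beta\le1$; so a finite sum of such pieces is locally H\"older-$\beta$.

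For $Y^{(\alpha)}$ I would start from (\ref{16651}),
\[
Y^{(\alpha)}_t= {\rm e}^{-\alpha t}\,Z_{a_t}-Z_{a_0}+ \alpha\int_0^t {\rm e}^{-\alpha s}\,Z_{a_s}\,ds .
\]
The term $Z_{a_0}$ is constant. In ${\rm e}^{-\alpha t}Z_{a_t}$ the factor ${\rm e}^{-\alpha t}$ is locally Lipschitz and $Z_{a_t}$ is locally H\"older-$\beta$ by the first observation, so their product is locally H\"older-$\beta$. In the integral term the integrand $s\mapsto{\rm e}^{-\alpha s}Z_{a_s}$ is continuous, hence locally bounded, so $t\mapsto\int_0^t{\rm e}^{-\alpha s}Z_{a_s}\,ds$ is locally Lipschitz, a fortiori locally H\"older-$\beta$. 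Summing the three pieces gives the claim for $Y^{(\alpha)}$, for every $\beta<H$.

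For $U^{(D,\gamma)}$ I would first rewrite (\ref{1669}) in a form free of stochastic integrals. Applying the partial integration formula displayed just before Definition~\ref{d1} and letting $T\to-\infty$ — the boundary term ${\rm e}^{(\gamma-1)T}Z_{a_T}$ tends to $0$ by (\ref{1670}), upon choosing $\beta$ with $H(1-\gamma)^{+}<\beta<H$ so that the exponent $(\gamma-1)+\beta/H$ is strictly positive — one obtains
\[
U^{(D,\gamma)}_t = {\rm e}^{-t}\,Z_{a_t} \;-\; (\gamma-1)\,{\rm e}^{-\gamma t}\int_{-\infty}^t {\rm e}^{(\gamma-1)s}\,Z_{a_s}\,ds .
\]
The first summand is locally H\"older-$\beta$ exactly as before. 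In the second, $F(t):=\int_{-\infty}^t{\rm e}^{(\gamma-1)s}Z_{a_s}\,ds$ is well defined (this is precisely the convergence established in the text preceding Definition~\ref{d1}), is $C^1$ with $F'(t)={\rm e}^{(\gamma-1)t}Z_{a_t}$ continuous, hence locally Lipschitz and locally bounded; multiplication by the smooth factor ${\rm e}^{-\gamma t}$ preserves local Lipschitz continuity. Summing, $U^{(D,\gamma)}$ is locally H\"older-$\beta$ for every $\beta<H$.

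The argument is mostly bookkeeping; the one place I would be careful in writing up is the reduction of (\ref{1669}) to the last display — namely the vanishing of the $-\infty$ boundary term and the local boundedness of $F$ uniformly on compact $t$-intervals. Both are quantitative versions of the estimates already used in the excerpt to give meaning to the stochastic integral defining $U^{(D,\gamma)}$, so no genuinely new difficulty arises; one only has to observe that, $\beta<H$ being at our disposal, it can always be taken close enough to $H$ for the boundary exponent to have the correct sign, whatever $\gamma>0$ is.
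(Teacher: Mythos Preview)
Your proof is correct. For $Y^{(\alpha)}$ you use exactly the same decomposition as the paper --- namely the integration-by-parts formula (\ref{16651}), which is just (\ref{122}) written with $X^{(D,\alpha)}_t={\rm e}^{-\alpha t}Z_{a_t}$ unfolded --- and then argue via the abstract stability of local H\"older continuity under composition, product, and primitivation, whereas the paper verifies the H\"older bound for ${\rm e}^{-\alpha t}Z_{a_t}$ by an explicit two-line estimate. These are the same argument in different dress.

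For $U^{(D,\gamma)}$ you take a genuinely different route. The paper works with the solution \emph{starting from~$0$}, writes
\[
U^{(D,\gamma)}_t= Y^{(1)}_t-\gamma {\rm e}^{-\gamma t}\int_0^t {\rm e}^{\gamma s}\,Y^{(1)}_s\,ds,
\]
and simply reuses the H\"older continuity of $Y^{(1)}$ just proved; no boundary issue at $-\infty$ arises. You instead work directly with the stationary version (\ref{1669}), integrate by parts against $Z$, and have to control the boundary term ${\rm e}^{(\gamma-1)T}Z_{a_T}$ and the tail of $F$ as $T\to-\infty$, which you do correctly via (\ref{1670}) and a suitable choice of $\beta$. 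What the paper's route buys is brevity and a clean reduction to the previous step; what yours buys is that you treat the process actually named in Definition~\ref{d1} rather than a different initial-value version (the gap between the two is only a smooth term ${\rm e}^{-\gamma t}\xi$, so this is a cosmetic advantage).
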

\begin{proof}
From (\ref{16652}) we have
\begin{equation}
\label{122}
Y^{(\alpha)}_t=X^{(D,\alpha)}_t-X^{(D,\alpha)}_0+\int_0^t\alpha X^{(D,\alpha)}_s\,ds.
\end{equation}
Consequently, $t\mapsto Y^{(\alpha)}_t$ is continuous and the H\"older continuity properties of  $Y^{(\alpha)}$ and  $X^{(D,\alpha)}$ are the same.
Hence, let $T>0$ be given and consider for $s,t<T$ and $\beta>0$
\begin{eqnarray*}
&&\hskip-1cm
\frac{\left|X^{(D,\alpha)}_t-X^{(D,\alpha)}_s\right|}{\left|t-s\right|^\beta}=
\frac{\left|{\rm e}^{-\alpha t}Z_{a_t}-{\rm e}^{-\alpha
s}Z_{a_s}\right|}{\left|t-s\right|^\beta}\\
&&\hskip1.8cm
= \frac{\left|{\rm
e}^{-\alpha t}\left|Z_{a_t}-Z_{a_s}\right|+Z_{a_s}\left|{\rm
e}^{-\alpha t}-{\rm e}^{-\alpha
s}\right|\right|}{\left|t-s\right|^\beta}\\
&&\hskip1.8cm
\leq
\frac{\left|Z_{a_t}-Z_{a_s}\right|}{\left|a_t-a_s\right|^\beta}\frac{\left|a_t-a_s\right|^\beta}
{\left|t-s\right|^\beta} +|t-s|^{1-\beta}\, |Z_{a_s}|\frac{\left|{\rm
e}^{-\alpha t}-{\rm e}^{-\alpha
s}\right|}{\left|t-s\right|}\\
&&\hskip1.8cm
\leq K_T
\frac{\left|Z_{a_t}-Z_{a_s}\right|}{\left|a_t-a_s\right|^\beta}+C_T,
\end{eqnarray*}
where $K_T$ and $C_T$ are (random) constants which do not depend on $s$ and $t.$
The claim follows now from the fact that
the paths of FBM are  (locally) H\"older continuous of order  $\beta <H.$
Similarly, for the process 
$U^{(D,\gamma)}$ (starting from 0) we have 
$$
U^{(D,\gamma)}_t= Y^{(1)}_t-\gamma {\rm e}^{-\gamma t}\int_0^t {\rm e}^{\gamma s}\,Y^{(1)}_s\,ds,
$$
and  it follows that also $U^{(D,\gamma)}$ is H\"older continuous of order $\beta<H.$
\end{proof}

\subsection{Kernel representations of covariances and short range dependence} 
\label{ssec32}

We make now the following assumption valid throughout the rest of the paper
$$
1/2<H<1.
$$ 
In this case, as is easily checked, 
the covariance of the fractional Brownian motion has for ${t_2}>{t_1}$ and ${s_2}>{s_1}$ the kernel representation 
$$
\E\left((Z_{t_2}-Z_{t_1})(Z_{s_2}-Z_{s_1})\right)=\int_{t_1}^{t_2}
\int_{s_1}^{s_2}\,H(2H-1)|u-v|^{2H-2}\,du\,dv.
$$
In the next proposition we derive an analogous representation for the process 
$Y^{(1)}.$  The result is formulated for all values on $\alpha>0.$
\begin{proposition}
\label{p1}
The covariance of $Y^{(\alpha)}$ with $1/2<H<1$ has the kernel representation
\begin{eqnarray}
  \label{141} 
&&\hskip-1cm\nonumber
\E\left(\left(Y^{(\alpha)}_{t_2}-Y^{(\alpha)}_{t_1}\right)\,
\left(Y^{(\alpha)}_{s_2}-Y^{(\alpha)}_{s_1}\right)\right)\\
&&\hskip1.5cm 
=
    C(\alpha,H)\,\int_{t_1}^{t_2}\int_{s_1}^{s_2}\, \frac{{\rm
      e}^{-\alpha(1-H)(u-v)/H}}{|1 - {\rm e}^{-\alpha
      (u-v)/H}|^{2(1-H)}}\,du\,dv,
\end{eqnarray}
where ${t_2}>{t_1},$ ${s_2}>{s_1},$ and 
$$
C(\alpha,H):= H(2H-1)\left(\frac \alpha H\right)^{2(1-H)}.
$$
The kernel 
\begin{equation}
\label{1411}
r_{\alpha,H}(u,v):=C(\alpha,H)\,\frac{{\rm
      e}^{-\alpha(1-H)(u-v)/H}}{|1 - {\rm e}^{-\alpha
      (u-v)/H}|^{2(1-H)}}
\end{equation}
is symmetric, i.e., $r_{\alpha,H}(u,v)=r_{\alpha,H}(v,u)$ for all $u,v\in\R.$
\end{proposition}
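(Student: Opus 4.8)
The plan is to compute the covariance of the increments of $Y^{(\alpha)}$ straight from the Riemann--Stieltjes representation $Y^{(\alpha)}_{t_2}-Y^{(\alpha)}_{t_1}=\int_{t_1}^{t_2}{\rm e}^{-\alpha u}\,dZ_{a_u}$ together with the kernel representation of the increment covariance of $Z$ recalled just above. First I would discretize: fix partitions $t_1=u_0<\dots<u_m=t_2$ and $s_1=v_0<\dots<v_n=s_2$ and put $S:=\sum_i{\rm e}^{-\alpha u_i}(Z_{a_{u_{i+1}}}-Z_{a_{u_i}})$, $T:=\sum_j{\rm e}^{-\alpha v_j}(Z_{a_{v_{j+1}}}-Z_{a_{v_j}})$. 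Since $u\mapsto a_u$ is strictly increasing, each pair $a_{u_i}<a_{u_{i+1}}$, $a_{v_j}<a_{v_{j+1}}$ is admissible for the recalled formula, so
\[
\E(S\,T)=H(2H-1)\sum_{i,j}{\rm e}^{-\alpha u_i}{\rm e}^{-\alpha v_j}\int_{a_{u_i}}^{a_{u_{i+1}}}\int_{a_{v_j}}^{a_{v_{j+1}}}|p-q|^{2H-2}\,dp\,dq ,
\]
and the change of variables $p=a_u$, $q=a_v$ in each sub-integral rewrites the right-hand side as $H(2H-1)\int_{t_1}^{t_2}\int_{s_1}^{s_2}{\rm e}^{-\alpha\underline u}{\rm e}^{-\alpha\underline v}|a_u-a_v|^{2H-2}a'_u a'_v\,du\,dv$, where $\underline u,\underline v$ denote the left endpoints of the partition cells of $u,v$.

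Next I would let the meshes tend to $0$. On the probabilistic side $S\to Y^{(\alpha)}_{t_2}-Y^{(\alpha)}_{t_1}$ and $T\to Y^{(\alpha)}_{s_2}-Y^{(\alpha)}_{s_1}$ almost surely, by the pathwise Riemann--Stieltjes convergence recalled in Section~\ref{sec12}; since $S,T$ are jointly Gaussian with mean zero and an a.s.\ convergent sequence of Gaussian variables converges in every $L^p$, this forces $\E(S\,T)\to\E\big((Y^{(\alpha)}_{t_2}-Y^{(\alpha)}_{t_1})(Y^{(\alpha)}_{s_2}-Y^{(\alpha)}_{s_1})\big)$. On the analytic side, the standing assumption $1/2<H<1$ makes $2H-2\in(-1,0)$, so $|a_u-a_v|^{2H-2}$ has only an integrable diagonal singularity and $|a_u-a_v|^{2H-2}a'_ua'_v$ is integrable over $[t_1,t_2]\times[s_1,s_2]$; together with ${\rm e}^{-\alpha\underline u}\to{\rm e}^{-\alpha u}$, ${\rm e}^{-\alpha\underline v}\to{\rm e}^{-\alpha v}$ uniformly, dominated convergence identifies the limit of the right-hand side with $H(2H-1)\int_{t_1}^{t_2}\int_{s_1}^{s_2}{\rm e}^{-\alpha u}{\rm e}^{-\alpha v}|a_u-a_v|^{2H-2}a'_ua'_v\,du\,dv$. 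Reconciling the two limiting procedures -- that is, making the passage from the pathwise Riemann--Stieltjes sums to the covariance integral fully rigorous in this regime -- is the one genuinely technical step; everything else is bookkeeping.

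It then remains to identify the integrand with the kernel $r_{\alpha,H}$ of (\ref{1411}). Inserting $a_u=(H/\alpha){\rm e}^{\alpha u/H}$ and $a'_u={\rm e}^{\alpha u/H}$ and factoring ${\rm e}^{\alpha u/H}-{\rm e}^{\alpha v/H}={\rm e}^{\alpha u/H}\big(1-{\rm e}^{-\alpha(u-v)/H}\big)$, one gets $|a_u-a_v|^{2H-2}=(H/\alpha)^{2H-2}{\rm e}^{(2H-2)\alpha u/H}|1-{\rm e}^{-\alpha(u-v)/H}|^{2H-2}$; collecting exponents through $-\alpha u+(2H-2)\alpha u/H+\alpha u/H=-\alpha(1-H)u/H$ and $-\alpha v+\alpha v/H=\alpha(1-H)v/H$ gives
\[
{\rm e}^{-\alpha u}{\rm e}^{-\alpha v}|a_u-a_v|^{2H-2}a'_ua'_v=(H/\alpha)^{2H-2}\,\frac{{\rm e}^{-\alpha(1-H)(u-v)/H}}{|1-{\rm e}^{-\alpha(u-v)/H}|^{2(1-H)}} .
\]
Since $H(2H-1)(H/\alpha)^{2H-2}=H(2H-1)(\alpha/H)^{2(1-H)}=C(\alpha,H)$, the right-hand side here is precisely $r_{\alpha,H}(u,v)$, which establishes (\ref{141}). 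For the asserted symmetry I would argue directly: with $w:=\alpha(u-v)/H$ and the identity $|1-{\rm e}^{w}|={\rm e}^{w}|1-{\rm e}^{-w}|$ the function ${\rm e}^{-(1-H)w}/|1-{\rm e}^{-w}|^{2(1-H)}$ is invariant under $w\mapsto-w$, i.e.\ under interchanging $u$ and $v$, whence $r_{\alpha,H}(u,v)=r_{\alpha,H}(v,u)$.

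One may equivalently carry out the change of variables $p=a_u$ already inside the stochastic integral, writing $Y^{(\alpha)}_{t_2}-Y^{(\alpha)}_{t_1}=\int_{a_{t_1}}^{a_{t_2}}(\alpha p/H)^{-H}\,dZ_p$, and then run the same Riemann-sum computation; this only reorganizes the argument, and the technical point flagged above is unchanged.
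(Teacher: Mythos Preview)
Your argument is correct, but it takes a somewhat more laborious route than the paper. The paper proceeds exactly along the lines of your final paragraph: it first performs the substitution $p=a_u$ inside the stochastic integral to obtain
\[
Y^{(\alpha)}_t=(H/\alpha)^H\int_{a_0}^{a_t}p^{-H}\,dZ_p
\]
(the paper writes $H^H$, which appears to be a typo), and then, rather than running a Riemann--sum computation, simply invokes the ready-made isometry formula (\ref{142}) of Gripenberg and Norros,
\[
\E\Big(\int f\,dZ\int g\,dZ\Big)=H(2H-1)\iint f(s)g(t)|s-t|^{2H-2}\,ds\,dt,
\]
with $f(p)=p^{-H}{\bf 1}_{(a_{t_1},a_{t_2})}(p)$ and $g(p)=p^{-H}{\bf 1}_{(a_{s_1},a_{s_2})}(p)$; these lie in $L^1\cap L^2$ since $a_0=H/\alpha>0$. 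Undoing the substitution in the resulting deterministic double integral yields~(\ref{141}) after the same exponent bookkeeping you carry out.

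What your approach buys is self-containment: you effectively re-prove the special case of (\ref{142}) that is needed here. The price is the ``genuinely technical step'' you flag, namely the passage from Riemann--Stieltjes sums to the covariance integral; the paper bypasses this entirely by citing the known result. Your verification of the symmetry of $r_{\alpha,H}$ via $|1-{\rm e}^{w}|={\rm e}^{w}|1-{\rm e}^{-w}|$ is a detail the paper leaves to the reader.
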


\begin{proof}
Recall  the formula (see Gripenberg and Norros \cite{gripenbergnorros96} Proposition 2.2)
\begin{eqnarray}
\label{142}
&&\hskip-1cm\nonumber
\E\left(\int_\R f(s)dZ_s\int_\R g(t)dZ_t\right)
\\
&&\hskip2cm 
=H(2H-1)\int_\R\int_\R f(s)g(t)|s-t|^{2H-2}\,dtds, 
\end{eqnarray}
where $1/2<H<1$ and $f,g\in {\bf L}^2(\R)\cap{\bf L}^1(\R).$ Since
$$
Y^{(\alpha )}_t:= \int_0^t {\rm e}^{-\alpha s}dZ_{a_s}= {\rm e}^{-\alpha t}\,Z_{a_t}-Z_{a_0}+ 
\alpha\int_0^t {\rm e}^{-\alpha s}\,Z_{a_s}\,ds
$$
simple manipulations yield 
$$
Y^{(\alpha )}_t=H^H\,\int_{a_0}^{a_t}s^{-H}\,dZ_s.
$$
Consequently, the claim follows by a straightforward 
application of (\ref{142}). 
\end{proof}
\begin{remark}
\label{abs}
Notice that the kernel $r_{\alpha,H}$ 
is in ${\bf L}^2([0,T]\times[0,T])$ if and only if $H>3/4.$ Consequently, for $Y^{(1)}$
we have similar absolute continuity properties as for fractional Brownian motion (see Cheridito 
\cite{cheridito03}). Namely, the measure induced by the process $\{B_t+Y^{(1)}_t:t\geq 0\},$ where $Y^{(1)}$ and 
the Brownian motion $B$ are assumed to be independent, is absolutely continuous with respect to the 
Wiener measure.  
\end{remark}
For the next result, recall from Proposition \ref{prop01} that the increments of $Y^{(\alpha)}$ 
are stationary.
\begin{corollary}
\label{cor1}
The increments of $Y^{(\alpha)}$ are positively correlated. The increment process 
$I_Y:=\{Y^{(\alpha)}_{n+1}-Y^{(\alpha)}_{n}; n=0,1,\ldots\}$ is stationary and short range dependent.
\end{corollary}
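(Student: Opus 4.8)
The statement makes three assertions about $Y^{(\alpha)}$, and the plan is to deduce all of them from the kernel representation established in Proposition \ref{p1} together with the stationarity of increments already recorded in Proposition \ref{prop01}. Since Proposition \ref{p1} holds for every $\alpha>0$, I would work directly with the kernel $r_{\alpha,H}$ for general $\alpha$ rather than passing through the scaling relation (\ref{1667}). The stationarity of $I_Y$ requires no new argument: it is exactly the increment-stationarity of $Y^{(\alpha)}$ asserted in Proposition \ref{prop01}. For positive correlation, I would observe that the integrand in (\ref{141}) is strictly positive almost everywhere: the constant $C(\alpha,H)=H(2H-1)(\alpha/H)^{2(1-H)}$ is positive because $H>1/2$, the exponential factor is positive, and the denominator is a positive power of a quantity that is positive off the diagonal $u=v$. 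Since the diagonal is a planar null set, it follows that for any two increments the covariance equals the integral of the positive kernel $r_{\alpha,H}$ over a rectangle and is therefore strictly positive, which is the claim of positive correlation.

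For short range dependence I would first reduce to a summability statement. By stationarity I may fix the reference index at $0$, so that $\rho_{I_Y}(n)=\E((Y^{(\alpha)}_1-Y^{(\alpha)}_0)(Y^{(\alpha)}_{n+1}-Y^{(\alpha)}_n))$, and by the positive-correlation step each $\rho_{I_Y}(n)\ge 0$. Consequently the partial sums $\sum_{n=0}^k\rho_{I_Y}(n)$ are nondecreasing in $k$, so the limit in part $(ii)$ of Definition \ref{lrd} exists if and only if the series converges; thus it suffices to prove $\sum_{n=0}^\infty\rho_{I_Y}(n)<\infty$.

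The core of the argument is the decay estimate for $\rho_{I_Y}(n)$. Applying Proposition \ref{p1} with the intervals $[t_1,t_2]=[n,n+1]$ and $[s_1,s_2]=[0,1]$ gives
$$
\rho_{I_Y}(n)=\int_n^{n+1}\int_0^1 r_{\alpha,H}(u,v)\,dv\,du.
$$
For $n\ge 2$ one has $u-v\ge n-1\ge 1$ throughout the rectangle, so the denominator of $r_{\alpha,H}$ is bounded below by the positive constant $(1-{\rm e}^{-\alpha/H})^{2(1-H)}$, while the numerator is at most ${\rm e}^{-\alpha(1-H)(n-1)/H}$. This produces a geometric bound $\rho_{I_Y}(n)\le C'\,{\rm e}^{-\alpha(1-H)n/H}$ with ratio ${\rm e}^{-\alpha(1-H)/H}<1$, so the tail of the series converges. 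The two remaining terms $\rho_{I_Y}(0)$ and $\rho_{I_Y}(1)$ are finite on their own: the only singularity of the integrand sits on $u=v$, where $1-{\rm e}^{-\alpha(u-v)/H}$ vanishes to first order, so near the diagonal $r_{\alpha,H}(u,v)$ behaves like $|u-v|^{-2(1-H)}$, which is integrable over a planar rectangle precisely because $2(1-H)<1$ for $H>1/2$. Combining the finite initial terms with the geometrically decaying tail yields $\sum_n\rho_{I_Y}(n)<\infty$, which by part $(ii)$ of Definition \ref{lrd} is short range dependence.

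The only genuinely delicate point is this last one, namely verifying that the diagonal singularity of $r_{\alpha,H}$ is integrable so that $\rho_{I_Y}(0)$ (and the corner contribution at $n=1$) is finite; I expect this to be routine given the exponent $2(1-H)<1$. Everything else follows directly from the positivity and exponential off-diagonal decay of the kernel, so the real content of the proof is the geometric bound that controls the tail of the correlation series.
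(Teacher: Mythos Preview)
Your proposal is correct and follows essentially the same route as the paper: positivity of the kernel $r_{\alpha,H}$ gives positive correlation, stationarity comes from Proposition~\ref{prop01}, and short range dependence is obtained by reading off exponential decay of $\rho_{I_Y}(n)$ from the integral representation over $[n,n+1]\times[0,1]$. The only cosmetic difference is that the paper factors out ${\rm e}^{-\alpha(1-H)n/H}$ and applies dominated convergence to identify the precise asymptotic constant, whereas you bound the integrand directly for $n\ge 2$ and treat the diagonal singularity separately for $n=0,1$; both yield the same geometric tail bound and hence summability.
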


\begin{proof}
From (\ref{141}) it follows immediately that the increments are positively correlated. Of course, we may also deduce from (\ref{141}) 
the stationarity of the increments of $Y^{(\alpha)}.$ To show that $I_Y$ 
is short range dependent consider
\begin{eqnarray}
\label{22} &&\hskip-.5cm \nonumber
\E\left(Y^o_{1}(Y^o_{n+1}-Y^o_{n})\right)=\int_n^{n+1}du\int_0^1dv\ r_{\alpha,H}(u,v)
\\
&&\hskip1cm
\nonumber
= \,
{\rm   e}^{-\alpha(1-H)n/H}\\
&&\hskip2cm
\nonumber
\times\,
\int_0^{1}du\int_0^1dv\, {\rm
  e}^{-\alpha(1-H)(u-v)/H}\,{|1 - {\rm e}^{-n/H}\,{\rm e}^{-\alpha
  (u-v)/H}|^{2(H-1)}}.
\end{eqnarray}
The integral term has a positive finite limit as $n\to\infty.$ Indeed, Lebesgue's dominated 
convergence theorem yields 
\begin{eqnarray*}
&&\hskip-1cm
\lim_ {n\to\infty}\int_0^{1}du\int_0^1dv\, {\rm
  e}^{-\alpha(1-H)(u-v)/H}\,{|1 - {\rm e}^{-n/H}\,{\rm e}^{-\alpha
  (u-v)/H}|^{2(H-1)}}\\
&&\hskip2cm
=
\int_0^{1}du\int_0^1dv\, {\rm
  e}^{-\alpha(1-H)(u-v)/H}.
\end{eqnarray*}
Consequently,
\begin{equation}
\label{215}
\rho_{Y^{(\alpha)}}(n):=\E\left(Y^{(\alpha)}_{1}(Y^{(\alpha)}_{n+1}-Y^{(\alpha)}_{n})\right)=O\left({\rm
  e}^{-(1-H) n/H}\right).
\end{equation}
and, hence,
\begin{eqnarray}
\label{23}
\lim_{N\to\infty}\E\left(Y^{(\alpha)}_NY^{(\alpha)}_{1}\right)=\sum_{n=0}^\infty \rho_{Y^{(\alpha)}}(n)<+\infty
\end{eqnarray}
completing the proof.
\end{proof}

Next we study the asymptotic behaviour of the variance and covariance of $Y^{(\alpha)}.$ 
For this, it is practical to rewrite the symmetric kernel $r_{\alpha,H}$ in (\ref{1411}) as 
$$
r_{\alpha,H}(t,s)=k_{\alpha,H}(t-s)
$$
with 
\begin{equation}
\label{2151}
k_{\alpha,H}(x):=C(\alpha,H)\,{\rm
  e}^{-\alpha(1-H)x/H}\,|1 - {\rm e}^{-\alpha
  x/H}|^{2H-2}.
\end{equation}
\bigskip

\begin{proposition}
\label{tpr21} The following formulas hold:
\begin{eqnarray} 
\label{t24b} 
&&\hskip-1.7cm  
\E\left(
(Y^{(\alpha)}_t-Y^{(\alpha)}_s)^2\right) =2
\int_0^{t-s}(t-s-x)\, k_{\alpha,H}(x)\,dx,
\end{eqnarray}
\begin{eqnarray} 
\label{t24c} 
&&\hskip-1cm
\E\left( Y^{(\alpha)}_tY^{(\alpha)}_s\right) =  \int_0^{t}\, (t-x)\,
k_{\alpha,H}(x)\,dx  \\ \nonumber && \hskip1.5cm +\int_0^{s}\, (s-x)\,
k_{\alpha,H}(x)\,dx - \,\int_0^{t-s}\,(t-s-x) k_{\alpha,H}(x)\,dx.
\end{eqnarray}
Moreover,
\begin{equation}
\label{29a}
\E\left( (Y^{(\alpha)}_t)^2\right)=O(t) \text{ as } t \to
\infty,
\end{equation}
and
\begin{eqnarray}
\label{29} 
&&
\hskip-1cm 
\lim_{t \to \infty}\E(Y^{(\alpha)}_tY^{(\alpha)}_s)
=  s
\int_0^{\infty}k_{\alpha,H}(x) dx  +  \int_0^s (s-x)\,k_{\alpha,H}(x) dx.
\end{eqnarray}

\end{proposition}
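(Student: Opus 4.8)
The plan is to derive all four identities directly from the kernel representation of Proposition~\ref{p1}, since each is really a statement about the single function $k_{\alpha,H}$ of (\ref{2151}). Throughout I take $0\le s< t$; the case $s=t$ is trivial and the covariance is symmetric in $s,t$.

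First I would prove (\ref{t24b}). Setting $t_1=s_1=s$ and $t_2=s_2=t$ in (\ref{141}) — which is legitimate because that representation descends from the bilinear identity (\ref{142}) and hence holds for any admissible endpoints, overlapping (indeed coinciding) intervals included — gives
$$
\E\left((Y^{(\alpha)}_t-Y^{(\alpha)}_s)^2\right)=\int_s^t\int_s^t k_{\alpha,H}(u-v)\,du\,dv .
$$
Since $k_{\alpha,H}$ is even (this is the symmetry statement in Proposition~\ref{p1}), the substitution $x=u-v$ together with Fubini turns the integral over the square into $\int_{-(t-s)}^{t-s}(t-s-|x|)\,k_{\alpha,H}(x)\,dx=2\int_0^{t-s}(t-s-x)\,k_{\alpha,H}(x)\,dx$, which is (\ref{t24b}). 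Formula (\ref{t24c}) then follows by polarization: since $Y^{(\alpha)}_0=0$ by (\ref{1665}), one has $\E(Y^{(\alpha)}_tY^{(\alpha)}_s)=\tfrac12\big(\E(Y^{(\alpha)}_t)^2+\E(Y^{(\alpha)}_s)^2-\E(Y^{(\alpha)}_t-Y^{(\alpha)}_s)^2\big)$, and substituting (\ref{t24b}) — with $s=0$ in the two variance terms — produces exactly the three-integral expression in (\ref{t24c}).

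For (\ref{29a}) and (\ref{29}) the one ingredient needed is that $k_{\alpha,H}$ is integrable on $(0,\infty)$. Near the origin $k_{\alpha,H}(x)\sim C(\alpha,H)(\alpha x/H)^{2H-2}$, which is integrable precisely because $2H-2\in(-1,0)$ — this is the only place the standing hypothesis $H>1/2$ is used — and as $x\to\infty$ the factor ${\rm e}^{-\alpha(1-H)x/H}$ forces exponential decay; the same two remarks also give $\int_0^\infty x\,k_{\alpha,H}(x)\,dx<\infty$. Granting $\int_0^\infty k_{\alpha,H}(x)\,dx<\infty$, (\ref{t24b}) with $s=0$ yields $\E(Y^{(\alpha)}_t)^2=2\int_0^t(t-x)k_{\alpha,H}(x)\,dx\le 2t\int_0^\infty k_{\alpha,H}(x)\,dx$, which is (\ref{29a}); in fact, writing $2\int_0^t(t-x)k_{\alpha,H}=2t\int_0^tk_{\alpha,H}-2\int_0^t xk_{\alpha,H}$ and using the exponential tail shows $\E(Y^{(\alpha)}_t)^2\sim 2t\int_0^\infty k_{\alpha,H}(x)\,dx$, in agreement with the linear growth announced in the abstract.

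Finally, for (\ref{29}) I would combine the first and third integrals of (\ref{t24c}): splitting $\int_0^t(t-x)k_{\alpha,H}=\int_0^{t-s}(t-x)k_{\alpha,H}+\int_{t-s}^t(t-x)k_{\alpha,H}$ and subtracting the third term shows that the first and third integrals of (\ref{t24c}) sum to $s\int_0^{t-s}k_{\alpha,H}(x)\,dx+\int_{t-s}^t(t-x)k_{\alpha,H}(x)\,dx$. Letting $t\to\infty$, the first piece tends to $s\int_0^\infty k_{\alpha,H}(x)\,dx$, while $0\le\int_{t-s}^t(t-x)k_{\alpha,H}(x)\,dx\le s\int_{t-s}^\infty k_{\alpha,H}(x)\,dx\to 0$ as the tail of a convergent integral; adding the fixed middle term $\int_0^s(s-x)k_{\alpha,H}(x)\,dx$ of (\ref{t24c}) gives (\ref{29}). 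The computations are routine; the only steps requiring a little care are the Fubini/change-of-variables bookkeeping for the square integral and the verification that $k_{\alpha,H}$ is integrable at the origin, and I do not anticipate a genuine obstacle.
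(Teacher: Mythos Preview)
Your proof is correct and, for (\ref{t24b}), (\ref{t24c}) and (\ref{29a}), essentially coincides with the paper's: the same symmetry-plus-Fubini reduction of the square integral, the same polarization for the covariance, and the same appeal to $\int_0^\infty k_{\alpha,H}<\infty$ and $\int_0^\infty x\,k_{\alpha,H}<\infty$ for the linear growth.

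The one genuine difference is the derivation of (\ref{29}). The paper does not pass through (\ref{t24c}); instead it returns to the kernel representation and evaluates $\E\big((Y^{(\alpha)}_t-Y^{(\alpha)}_s)Y^{(\alpha)}_s\big)=\int_s^t\!\int_0^s k_{\alpha,H}(u-v)\,dv\,du$ directly, splitting the $x=u-v$ range into $[0,s]$, $[s,t-s]$, $[t-s,t]$ (for $t>2s$), taking the limit, and then adding $\E\big((Y^{(\alpha)}_s)^2\big)$. Your route is more economical: once (\ref{t24c}) is in hand you simply combine its first and third integrals algebraically to get $s\int_0^{t-s}k_{\alpha,H}+\int_{t-s}^t(t-x)k_{\alpha,H}$ and let $t\to\infty$, avoiding a fresh double-integral computation. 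The paper's approach, on the other hand, gives an exact three-term formula for $\E\big((Y^{(\alpha)}_t-Y^{(\alpha)}_s)Y^{(\alpha)}_s\big)$ at finite $t$, which is of some independent interest. Either way the argument is routine and your anticipation that no obstacle arises is correct.
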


\begin{proof} We apply (\ref{141}) to obtain (\ref{t24b}): 

\begin{eqnarray}
\label{24} &&\hskip-2.2cm
\nonumber
\E\left( (Y^{(\alpha)}_t-Y^{(\alpha)}_s)^2\right)= \int_s^{t}du\int_s^tdv\,r_{\alpha,H}(u,v)
\\
&&\hskip1.2cm \nonumber = 2\int_s^{t}du\int_s^udv\,r_{\alpha,H}(u,v)
\\
&&\hskip1.2cm \nonumber = 2\,\,\int_s^{t}dy\int_0^{y-s}dx
\ k_{\alpha,H}(x)
\\
&&\hskip1.2cm \nonumber =
2\,\,\int_0^{t-s}dx\int_{x+s}^{t}dy \ k_{\alpha,H}(x)
\\
&&\nonumber\hskip1.2cm  =2\,
\int_0^{t-s}(t-s-x)\, k_{\alpha,H}(x)\,dx.
\end{eqnarray}
Putting here $s=0$ and using  
$$
\int_0^\infty  k_{\alpha,H}(x)\,dx <\infty \quad{\rm and}\quad
\int_0^\infty x\, k_{\alpha,H}(x)\,dx <\infty
$$
yield (\ref{29a}). Furthermore, straightforward computations produces formula (\ref{t24c}) 
from  (\ref{t24b}). 
It remains to to prove (\ref{29}). Consider for $t>2s$
\begin{eqnarray}
\label{27} &&\hskip-2cm \nonumber \E\left(
(Y^{(\alpha)}_t-Y^{(\alpha)}_s)Y^{(\alpha)}_s\right)
= \int_s^{t}du\int_0^sdv\,r_{\alpha,H}(u,v)
\\
&&\hskip2cm \nonumber
= \,
\int_s^{t}du\int_0^sdv\ k_{\alpha,H}(u-v)
\\
&&\hskip2cm \nonumber = \int_0^{s}\, x\,
k_{\alpha,H}(x)\,dx + s\,\int_s^{t-s}\, k_{\alpha,H}(x)\,dx
\\
&&\nonumber\hskip3cm +\int_{t-s}^{t}(t-x)\, k_{\alpha,H}(x)\,dx.
\end{eqnarray}
Consequently,
\begin{eqnarray}
\label{28}\nonumber && \lim_{t \to \infty}\E\left((Y^{(\alpha)}_t-Y^{(\alpha)}_s)Y^{(\alpha)}_s\right)
\\
&& \hskip2cm \nonumber = \int_0^{s}\, x\,
k_{\alpha,H}(x)\,dx + s\,\int_s^{\infty}\, k_{\alpha,H}(x)\,dx
\\
&&\nonumber
 \hskip2cm  = s\,\int_0^{\infty}\,
k_{\alpha,H}(x)\,dx -\int_0^{s}\, (s-x)\, k_{\alpha,H}(x)\,dx,
\end{eqnarray}
from which (\ref{29}) easily follows. 
\end{proof}
\begin{remark}
\label{rem2}
The short range dependence property of $Y^{(\alpha)}$ also follows 
from (\ref{29}) since (recall that   $Y^{(\alpha)}_0=0$) 
$$
\sum_{n=0}^\infty \rho_{Y^{(\alpha)}}(n)=
\lim_{N\to\infty}\E\left(Y^{(\alpha)}_NY^{(\alpha)}_{1}\right)<+\infty.
$$
\end{remark}

\begin{proposition}
\label{p11}
The covariance of $U^{(D,\gamma)}$  has the kernel representation
\begin{eqnarray}
  \label{281}\nonumber 
&&
\E\left(U^{(D,\gamma)}_tU^{(D,\gamma)}_s\right)
\\
&&\hskip1.5cm\nonumber 
=H^{2H-2}\,{\rm e}^{-\gamma(t+s)}\,\int_{-\infty}^{t}\int_{-\infty}^{s}\, 
\frac{{\rm e}^{(\gamma-1+\frac 1H)(u+v)}}{|{\rm e}^{ u/H}-{\rm e}^{v/H}|^{2(1-H)}}\,du\,dv.
\end{eqnarray}
\end{proposition}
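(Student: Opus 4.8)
The plan is to reduce the statement to the Gripenberg--Norros isometry (\ref{142}), exactly as in the proof of Proposition \ref{p1}; the only genuinely new feature is that the underlying integral now runs down to the left endpoint $0$ (equivalently, the time integral runs out to $-\infty$). First I would rewrite $U^{(D,\gamma)}$ as a single Riemann--Stieltjes integral against $Z$. Starting from the second line of (\ref{1669}) (so $\alpha=1$ and $a_s=H{\rm e}^{s/H}$) and substituting $r=a_s$, so that ${\rm e}^{(\gamma-1)s}=(r/H)^{H(\gamma-1)}$ and $s\in(-\infty,t]$ corresponds to $r\in(0,a_t]$, one gets
$$
U^{(D,\gamma)}_t=H^{H(1-\gamma)}\,{\rm e}^{-\gamma t}\int_0^{a_t}r^{H(\gamma-1)}\,dZ_r .
$$
(One could instead bypass this step and compute directly from the kernel representation (\ref{141}) of $Y^{(1)}$ together with $U^{(D,\gamma)}_t={\rm e}^{-\gamma t}\int_{-\infty}^t{\rm e}^{\gamma s}\,d\widehat Y^{(1)}_s$; the bookkeeping is the same.)

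Next I would feed $f(r)={\rm e}^{-\gamma t}H^{H(1-\gamma)}r^{H(\gamma-1)}\mathbf 1_{(0,a_t]}(r)$ and the corresponding $g$ into (\ref{142}). When $\gamma>1-\tfrac1{2H}$ the function $r^{H(\gamma-1)}$ lies in ${\bf L}^1\cap{\bf L}^2$, so (\ref{142}) applies verbatim and yields
$$
\E\bigl(U^{(D,\gamma)}_tU^{(D,\gamma)}_s\bigr)=H(2H-1)H^{2H(1-\gamma)}{\rm e}^{-\gamma(t+s)}\int_0^{a_t}\!\!\int_0^{a_s}r^{H(\gamma-1)}\rho^{H(\gamma-1)}|r-\rho|^{2H-2}\,d\rho\,dr .
$$
Reversing the substitution via $r=H{\rm e}^{u/H},\ \rho=H{\rm e}^{v/H}$ (so $dr\,d\rho={\rm e}^{(u+v)/H}\,du\,dv$, $|r-\rho|^{2H-2}=H^{2H-2}|{\rm e}^{u/H}-{\rm e}^{v/H}|^{2H-2}$, and $r^{H(\gamma-1)}\rho^{H(\gamma-1)}=H^{2H(\gamma-1)}{\rm e}^{(\gamma-1)(u+v)}$), and noting ${\rm e}^{(\gamma-1)(u+v)}{\rm e}^{(u+v)/H}={\rm e}^{(\gamma-1+1/H)(u+v)}$, collapses everything to the double integral in (\ref{281}); collecting the powers of $H$ together with the factor $H(2H-1)$ coming from (\ref{142}) supplies the prefactor.

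For general $\gamma>0$ the integrand need not belong to ${\bf L}^2$ near $0$, so (\ref{142}) cannot be invoked directly, and this is the only point requiring care. I would argue by truncation: for $T<0$ put $U^{(D,\gamma),T}_t:=H^{H(1-\gamma)}{\rm e}^{-\gamma t}\int_{a_T}^{a_t}r^{H(\gamma-1)}\,dZ_r$, for which the integrand is bounded and compactly supported; apply (\ref{142}), perform the change of variables above to obtain the double integral over $[T,t]\times[T,s]$, and let $T\to-\infty$. On the left, $U^{(D,\gamma),T}_t\to U^{(D,\gamma)}_t$ almost surely --- as already observed in the text by partial integration and the local H\"older continuity (\ref{1670}) of $Z$ --- and within a jointly Gaussian family of mean-zero variables almost sure convergence forces convergence of the covariances, so the left side tends to $\E(U^{(D,\gamma)}_tU^{(D,\gamma)}_s)$. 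On the right the integrand is nonnegative, so monotone convergence identifies the limit with the double integral in (\ref{281}), which is then automatically finite, being equal to the left-hand limit. Equating the two limits proves the proposition. The one real obstacle is thus this extension of the isometry past its stated ${\bf L}^2$ hypothesis; everything else is the change-of-variables computation already carried out for $Y^{(1)}$.
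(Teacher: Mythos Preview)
Your argument is correct and follows the same overall strategy as the paper: rewrite $U^{(D,\gamma)}_t$ as $H^{H(1-\gamma)}{\rm e}^{-\gamma t}\int_0^{a_t} r^{H(\gamma-1)}\,dZ_r$, apply the isometry (\ref{142}), and undo the substitution $r=H{\rm e}^{u/H}$.

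The one genuine difference is how the ${\bf L}^2$-hypothesis of (\ref{142}) is bypassed for small $\gamma$. The paper does not truncate; instead it invokes the Pipiras--Taqqu extension of (\ref{142}), which only requires
\[
\int_\R\int_\R |f(u)||g(v)||u-v|^{2H-2}\,du\,dv<\infty,
\]
and then verifies this condition directly for $f(u)=g(u)=u^{(\gamma-1)H}{\bf 1}_{(0,a_t)}(u)$ by an explicit Beta-function computation, obtaining
\[
\int_0^1\!\!\int_0^1 (uv)^{(\gamma-1)H}|u-v|^{2H-2}\,du\,dv=\frac{2}{\gamma H}\,{\rm Beta}\bigl(1+(\gamma-1)H,\,2H-1\bigr).
\]
Your route---apply (\ref{142}) on $[a_T,a_t]$, pass to the limit $T\to-\infty$ using a.s.\ convergence plus the fact that a.s.\ limits within a mean-zero Gaussian family force convergence of covariances, and use monotone convergence on the right---is a valid self-contained substitute that effectively reproves the needed special case of the Pipiras--Taqqu extension. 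The paper's approach has the advantage of giving an explicit closed form for the finiteness of the double integral; yours avoids an external reference. Either way, as you correctly identify, this extension is the only non-routine point.
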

\begin{proof}
As in the proof of Proposition \ref{p1}, we use also here formula (\ref {142}). However, now we need an extended version 
due to Pipiras and Taqqu \cite{pipirastaqqu00} stating that (\ref {142}) holds true for functions $f$ and $g$ satisfying 
\begin{equation}
\label{282}
\int_\R\int_\R |f(s)||g(t)||s-t|^{2H-2}\,dtds<\infty. 
\end{equation}
Consider 
\begin{equation}
\label{283}
U^{(D,\gamma)}_t={\rm e}^{-\gamma t}\,\int_{-\infty}^t {\rm e}^{(\gamma-1)
s}dZ_{a_s}
= H^{-(\gamma-1)H}\,
{\rm e}^{-\gamma t}\,\int_{0}^{a_t}s^{(\gamma-1)H}\,  dZ_{s}.
\end{equation}
To check that condition (\ref{282}) is valid for $f(s)=g(s)=s^{(\gamma-1)H}{\bf 1}_{(0,a_t)}(s)$
it is enough to show that 
\begin{eqnarray*}
&&
\int_0^1\int_0^1 (uv)^{(\gamma-1)H}|u-v|^{2H-2}\,dudv
\\
&&\hskip2cm
= 2\int_0^1du \,u^{(\gamma-1)H}\int_0^udv\, v^{(\gamma-1)H}(u-v)^{2H-2}<\infty. 
\end{eqnarray*}
The inner integral can be expressed in terms of the Beta-function
$$
\int_0^u v^{(\gamma-1)H}(u-v)^{2H-2}\, dv=u^{(\gamma+1)H-1} {\rm Beta}(1+(\gamma-1)H,2H-1).
$$
Consequently, 
$$
\hskip.5cm
\int_0^1\int_0^1 (uv)^{(\gamma-1)H}|u-v|^{2H-2}\,dudv = \frac 2{\gamma H}\, {\rm Beta}(1+(\gamma-1)H,2H-1),
$$
and condition (\ref{282}) holds. To verify the claimed kernel representation is now a straightforward computation 
using formula (\ref {142}).
\end{proof}

Recall from Corollary \ref{cor1} that the increment process of $Y^{(1)}$ is short range dependent, and that if $Y^{(1)}$ 
is used as the driving process in the Langevin equation the solution is the process $U^{(D,\gamma)}.$ 
In the next proposition we show that also $U^{(D,\gamma)}$ is short range dependent. Formula (\ref{284}) 
can be compared with the corresponding formula (\ref{166a}) for  $X^{(D,\alpha)}.$ In fact, (\ref{166a}) 
with $\alpha=1$ is (\ref{284}) with $\gamma=1,$ as it should.  
\begin{proposition}
\label{p12}
The rate of decay of the covariance of $U^{(D,\gamma)}$ is exponential. More precisely,
\begin{equation}
\label{284}
\E\left(U^{(D,\gamma)}_tU^{(D,\gamma)}_s\right)=
O\left(\exp\left(-\min\{\gamma,(1-H)/H\}\,t\right)\right),\quad {\rm as}\ t\to \infty.
\end{equation}
In particular, the stationary process  $U^{(D,\gamma)}$ is short range dependent. 
\end{proposition}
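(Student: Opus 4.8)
The plan is to estimate the double integral in the kernel representation (\ref{281}) of Proposition \ref{p11}. Fix $s$ and take $t>s+1$. I would split the outer integral $\int_{-\infty}^{t}du$ into the ``near'' part $\int_{-\infty}^{s+1}$ and the ``far'' part $\int_{s+1}^{t}$, writing $\E(U^{(D,\gamma)}_tU^{(D,\gamma)}_s)=H^{2H-2}{\rm e}^{-\gamma(t+s)}(I_1(s)+I_2(t,s))$, where, with $g(u,v):={\rm e}^{(\gamma-1+1/H)(u+v)}\,|{\rm e}^{u/H}-{\rm e}^{v/H}|^{-2(1-H)}$ denoting the integrand of (\ref{281}), we set $I_1(s)=\int_{-\infty}^{s+1}\int_{-\infty}^{s}g(u,v)\,dv\,du$ and $I_2(t,s)=\int_{s+1}^{t}\int_{-\infty}^{s}g(u,v)\,dv\,du$. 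The term $I_1(s)$ is a finite constant independent of $t$: letting $t\downarrow s+1$ in (\ref{281}) identifies it, up to the positive factor $H^{2H-2}{\rm e}^{-\gamma(2s+1)}$, with $\E(U^{(D,\gamma)}_{s+1}U^{(D,\gamma)}_s)$, which is finite since $U^{(D,\gamma)}$ has finite second moments. (Directly, finiteness also follows from the integrable diagonal singularity — near $u=v$ the denominator behaves like $|u-v|^{2(1-H)}$ with $2(1-H)<1$ — together with the decay of the numerator as $u,v\to-\infty$, using $\gamma-1+\tfrac1H>\gamma>0$.) After multiplying by ${\rm e}^{-\gamma(t+s)}$, the $I_1$ contribution is only $O({\rm e}^{-\gamma t})$, so everything reduces to bounding $I_2(t,s)$.

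For the far part the singularity is gone, because on its domain $u-v\geq 1$, so ${\rm e}^{u/H}-{\rm e}^{v/H}={\rm e}^{u/H}(1-{\rm e}^{(v-u)/H})\geq(1-{\rm e}^{-1/H}){\rm e}^{u/H}$ and hence the factor $|{\rm e}^{u/H}-{\rm e}^{v/H}|^{-2(1-H)}$ in $g$ is at most a positive constant times ${\rm e}^{-2(1-H)u/H}$. Since $\int_{-\infty}^{s}{\rm e}^{(\gamma-1+1/H)v}\,dv<\infty$ (again because $\gamma-1+1/H>0$), this bounds $I_2(t,s)$ by a constant depending on $s$ times $\int_{s+1}^{t}{\rm e}^{\kappa u}\,du$, where $\kappa=\gamma-1+\tfrac1H-\tfrac{2(1-H)}{H}$. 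The one piece of bookkeeping is to simplify $\kappa$: since $2(1-H)/H=2/H-2$, one gets $\kappa=\gamma+1-\tfrac1H=\gamma-\tfrac{1-H}{H}$. Thus $\int_{s+1}^{t}{\rm e}^{\kappa u}\,du$ is $O(1)$ when $\gamma<(1-H)/H$, is $O(t)$ when $\gamma=(1-H)/H$, and is $O({\rm e}^{(\gamma-(1-H)/H)t})$ when $\gamma>(1-H)/H$.

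Combining the two parts and multiplying by ${\rm e}^{-\gamma(t+s)}$ (with ${\rm e}^{-\gamma s}$ a constant in $t$): for $\gamma<(1-H)/H$ we obtain $O({\rm e}^{-\gamma t})$; for $\gamma>(1-H)/H$ we obtain $O({\rm e}^{-\gamma t}{\rm e}^{(\gamma-(1-H)/H)t})=O({\rm e}^{-(1-H)t/H})$; in the borderline case the bound is $O(t\,{\rm e}^{-\gamma t})$, which is $o({\rm e}^{-\gamma' t})$ for every $\gamma'<\gamma$. In every case (\ref{284}) holds. Short range dependence then follows as for $Y^{(\alpha)}$ in Corollary \ref{cor1}: setting $\delta:=\min\{\gamma,(1-H)/H\}>0$ and using stationarity of $U^{(D,\gamma)}$, one has $\sum_{n\geq0}|\rho_{U^{(D,\gamma)}}(n)|=\sum_{n\geq0}|\E(U^{(D,\gamma)}_iU^{(D,\gamma)}_{i+n})|\leq C\sum_{n\geq0}{\rm e}^{-\delta n}<\infty$, so the partial sums converge and Definition \ref{lrd} applies. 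I expect the only delicate points to be getting the exponent $\kappa$ right and justifying the finiteness of $I_1(s)$ (most cleanly via the already-established finite-second-moment property of $U^{(D,\gamma)}$); the borderline value $\gamma=(1-H)/H$ yields the slightly weaker bound $O(t\,{\rm e}^{-\gamma t})$, which is harmless for the short-range-dependence conclusion.
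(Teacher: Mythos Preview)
Your proof is correct and follows essentially the same approach as the paper: split the outer $u$-integral into a near part (finite constant, contributing $O({\rm e}^{-\gamma t})$) and a far part where $u-v$ is bounded below so the singularity is uniformly controlled, then reduce to integrating ${\rm e}^{(\gamma+1-1/H)u}$. The only cosmetic differences are that the paper sets $s=0$ by stationarity and splits at a generic $T>0$ rather than at $s+1$; your explicit handling of the borderline $\gamma=(1-H)/H$ (yielding $O(t\,{\rm e}^{-\gamma t})$) is in fact slightly more careful than the paper's.
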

\begin{proof} 
Without loss of generality, we may take $s=0$ and, hence, consider
\begin{eqnarray*}
&&\hskip-1cm
\E\left(U^{(D,\gamma)}_tU^{(D,\gamma)}_0\right)
=
H^{2H-2}\,{\rm e}^{-\gamma t}\,\int_{-\infty}^{t}\int_{-\infty}^{0}\, 
\frac{{\rm e}^{(\gamma-1+\frac 1H)(u+v)}}{|{\rm e}^{u/H}-{\rm e}^{v/H}|^{2(1-H)}}\,du\,dv
\\
&&
\hskip2.05cm 
= \Delta_1(t) +  \Delta_2(t),
\end{eqnarray*}
where, for some fixed $T>0,$
$$
\Delta_1(t):=H^{2H-2}\,{\rm e}^{-\gamma t}\,\int_{-\infty}^{T}\,du\int_{-\infty}^{0}\,dv\,
\frac{{\rm e}^{(\gamma-1+\frac 1H)(u+v)}}{|{\rm e}^{ u/H}-{\rm e}^{ v/H}|^{2(1-H)}}
$$
and
$$
\Delta_2(t):=H^{2H-2}\,{\rm e}^{-\gamma t}\,\int_{T}^{t}\, du\int_{-\infty}^{0}\,dv\,
\frac{{\rm e}^{(\gamma-1+\frac 1H)(u+v)}}{|{\rm e}^{ u/H}-{\rm e}^{ v/H}|^{2(1-H)}}.
$$
Clearly,
$$
\Delta_1(t)=O\left(\exp(-\gamma \,t)\right)\quad {\rm as}\ t\to +\infty.
$$ 
For the integral term in $\Delta_2(t)$ we have 
\begin{eqnarray*}
&&\hskip-1cm
\int_{T}^{t}\, du\int_{-\infty}^{0}\,dv\,
\frac{{\rm e}^{(\gamma-1+\frac 1H)(u+v)}}{|{\rm e}^{ u/H}-{\rm e}^{ v/H}|^{2(1-H)}}
= 
\int_{T}^{t}\, du\int_{-\infty}^{0}\,dv\,
\frac{{\rm e}^{(\gamma+1-\frac 1H)\,u}\,{\rm e}^{(\gamma-1+\frac 1H)\,v}}
{\left(1-{\rm e}^{(v-u)/H}\right)^{2(1-H)}}.
\end{eqnarray*}
For $(u,v)\in(T,t)\times(-\infty,0)$ 
$$
1\leq \left(1-{\rm e}^{ (v-u)/H}\right)^{2(1-H)}\leq\left(1-{\rm e}^{- T/H}\right)^{2(1-H)},
$$
and, consequently, formula (\ref{284}) holds.
\end{proof}

\subsection{Weak convergence of $Y^{(1)}$ to Brownian motion}
\label{ssec35}

In Proposition \ref{tpr21} it is proved that the growth of the variance of $Y^{(1)}_t$  is asymtotically linear 
as $t\to +\infty$ (see (\ref{29a})). This suggests that $Y^{(1)},$ when properly scaled, behaves asymtotically 
as a standard Brownian motion. We give the precise statement in the next proposition formulated 
for arbitrary $\alpha>0.$

\begin{proposition}
\label{351}
For $a>0$ define 
$$
{Z}^{(a,\alpha)}_t:=\frac{1}{\sqrt{a}}\,Y^{(\alpha)}_{at},\ t\geq 0,
$$ 
and let $B=\{B_t:t\geq 0\}$ denote standard Brownian motion started from 0. Then as $a\to +\infty$  
\[
\{{Z}^{(a,\alpha)}_t : t\geq 0\} \stackrel{\rm weakly}\Rightarrow \{\sigma B_t: t\geq 0\},
\]
where $\stackrel{\rm weakly}\Rightarrow$ stands for weak convergence in the space of continuous functions and  
$\sigma=\sigma(\alpha,H)$ is a non-random quantity depending only on $\alpha$ and $H$ (see (\ref{352})).  
\end{proposition}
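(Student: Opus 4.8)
The plan is to use the standard two-step recipe for weak convergence in $C[0,\infty)$: first establish convergence of finite-dimensional distributions, then establish tightness. Since $Z^{(a,\alpha)}$ is a centered Gaussian process (being a linear functional of the Gaussian process $Z$, via the representation $Y^{(\alpha)}_t=H^H\int_{a_0}^{a_t}s^{-H}dZ_s$ from the proof of Proposition \ref{p1}), the finite-dimensional distributions are multivariate Gaussians determined entirely by the covariance function. Hence convergence of finite-dimensional distributions reduces to pointwise convergence of covariances $\E\left(Z^{(a,\alpha)}_tZ^{(a,\alpha)}_s\right)=\frac1a\,\E\left(Y^{(\alpha)}_{at}Y^{(\alpha)}_{as}\right)$ to $\sigma^2(s\wedge t)$.

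First I would compute this limiting covariance. By formula (\ref{t24c}) of Proposition \ref{tpr21}, $\E\left(Y^{(\alpha)}_{at}Y^{(\alpha)}_{as}\right)$ is an explicit combination of the functions $\Phi(T):=\int_0^T(T-x)k_{\alpha,H}(x)\,dx$. As $T\to\infty$, since $\int_0^\infty k_{\alpha,H}(x)\,dx<\infty$ and $\int_0^\infty x\,k_{\alpha,H}(x)\,dx<\infty$ (both noted in the proof of Proposition \ref{tpr21}), one has the asymptotics $\Phi(T)=T\int_0^\infty k_{\alpha,H}(x)\,dx-\int_0^\infty x\,k_{\alpha,H}(x)\,dx+o(1)$, i.e. $\Phi$ is asymptotically affine with slope $m:=\int_0^\infty k_{\alpha,H}(x)\,dx$. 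Feeding this into (\ref{t24c}) with $t,s$ replaced by $at,as$ (say $t>s$), the leading linear-in-$a$ terms are $at\cdot m+as\cdot m-a(t-s)\cdot m=2as\,m$, the constant terms cancel, and the remainders are $o(a)$. Dividing by $a$ gives $\lim_{a\to\infty}\frac1a\E\left(Y^{(\alpha)}_{at}Y^{(\alpha)}_{as}\right)=2sm=2(s\wedge t)\int_0^\infty k_{\alpha,H}(x)\,dx$, so $\sigma^2=\sigma^2(\alpha,H)=2\int_0^\infty k_{\alpha,H}(x)\,dx$, which is the displayed quantity (\ref{352}). This matches the covariance structure of $\{\sigma B_t\}$ and settles convergence of finite-dimensional distributions.

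Next I would handle tightness. In $C[0,T]$ it suffices, by the Kolmogorov–Chentsov moment criterion, to bound $\E\left(|Z^{(a,\alpha)}_t-Z^{(a,\alpha)}_s|^p\right)$ for some $p>2$ by $K|t-s|^{1+\delta}$ uniformly in $a$. Because the increments are Gaussian, $\E\left(|Z^{(a,\alpha)}_t-Z^{(a,\alpha)}_s|^p\right)=c_p\left(\E\left(|Z^{(a,\alpha)}_t-Z^{(a,\alpha)}_s|^2\right)\right)^{p/2}$, so everything comes down to a uniform-in-$a$ bound on the variance of the increment. From (\ref{t24b}), $\E\left(|Z^{(a,\alpha)}_t-Z^{(a,\alpha)}_s|^2\right)=\frac2a\int_0^{a(t-s)}\bigl(a(t-s)-x\bigr)k_{\alpha,H}(x)\,dx$. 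Using $\int_0^\infty k_{\alpha,H}(x)\,dx<\infty$ and $\int_0^\infty x\,k_{\alpha,H}(x)\,dx<\infty$ one gets $\int_0^{L}(L-x)k_{\alpha,H}(x)\,dx\le L\int_0^\infty k_{\alpha,H}(x)\,dx$, so the increment variance is at most $2(t-s)\int_0^\infty k_{\alpha,H}(x)\,dx$, uniformly in $a$. With $p=4$ this yields the bound $\le C|t-s|^2$, giving tightness via Kolmogorov–Chentsov; combined with $Z^{(a,\alpha)}_0=0$ this gives tightness in $C[0,T]$, and hence (letting $T\uparrow\infty$) in $C[0,\infty)$.

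The only genuinely delicate point is the integrability of $k_{\alpha,H}$ near $x=0$: from (\ref{2151}), $k_{\alpha,H}(x)\sim C(\alpha,H)(\alpha x/H)^{2H-2}$ as $x\to0^+$, and since $2H-2\in(-1,0)$ for $H\in(1/2,1)$ this is integrable, while the exponential factor ${\rm e}^{-\alpha(1-H)x/H}$ secures integrability (of both $k_{\alpha,H}$ and $x\,k_{\alpha,H}$) at $+\infty$. This is exactly what makes both the fdd computation and the uniform tightness bound go through, and it is where the hypothesis $1/2<H<1$ is used; no step beyond these elementary estimates is needed.
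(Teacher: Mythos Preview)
Your proof is correct and follows essentially the same two-step strategy as the paper: convergence of covariances via formula (\ref{t24c}) for the finite-dimensional distributions, and a uniform-in-$a$ linear bound on the increment variance from (\ref{t24b}) for tightness. The only cosmetic difference is that the paper invokes Lamperti's criterion directly on the second-moment bound $\E\bigl((Z^{(a,\alpha)}_t-Z^{(a,\alpha)}_s)^2\bigr)\le C(t-s)$, whereas you make the passage through Gaussianity and Kolmogorov--Chentsov with $p=4$ explicit; your version is arguably the cleaner justification of why a linear second-moment bound suffices.
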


\begin{proof}
We show first that the finite dimensional distributions of ${Z}^{(a,\alpha)}$ converge to the finite dimensional 
distributions of $\sigma B.$  Since ${Z}^{(a,\alpha)}$ is a Gaussian process with mean zero it is enough to verify 
the convergence of the covariance function. From (\ref{t24c}) in Proposition \ref{tpr21} we have for $t>s$
\begin{eqnarray*}
\E\left({Z}^{(a,\alpha)}_t {Z}^{(a,\alpha)}_s\right)
&=&\frac 1a\E\left(Y^{(\alpha)}_{at}Y^{(\alpha)}_{as}\right)\\
&=&\frac{1}{a} \,\Big( \int_0^{at}\, (at-x)\, k_{\alpha,H}(x)\,dx 
+\int_0^{as}\, (as-x)\,
k_{\alpha,H}(x)\,dx
\\ 
&& \hskip1.5cm 
- \,\int_0^{a(t-s)}\,(at-as-x) k_{\alpha,H}(x)\,dx \Big)
\end{eqnarray*}
with $k_{\alpha,H}$ defined in (\ref{2151}). Letting here $a\to +\infty$ yields, after some simple computations, 
\begin{eqnarray*}
\lim_{a \to\infty} \E\left({Z}^{(a)}_t{Z}^{(a)}_s\right)
&=&
2\,\,s\,\int_0^{\infty}\, k_{\alpha,H}(x)\,dx\\
&=&\kappa(\alpha,H)\,s,
\end{eqnarray*}
where 
$$
\kappa(\alpha,H):=2\,C(\alpha,H)\,\frac H\alpha\,{\rm Beta}(1-H,2H-1)
$$
and ${\rm Beta}(1-H,2H-1)$ is the Beta function. Since $\E(B_tB_s)=s$ for $t>s$ we have proved the convergence 
of finite dimensional distributions of ${Z}^{(a,\alpha)}$ to the finite dimensional distributions of $\sigma B$
with 
\begin{equation}
\label{352}
\sigma=\sigma(\alpha,H)=\sqrt{\kappa(\alpha,H)}.
\end{equation}
To prove tightness, it is enough to verify (see, e.g., Lamperti \cite{lamperti62}) 
that there exists a constant $C$ (might depend on $\alpha$ and $H$) such that for all $a>0$ and $t>s$
$$
\Delta:=\E\left(\left({Z}^{(a,\alpha)}_t-{Z}^{(a,\alpha)}_s\right)^2\right)\leq C\,(t-s).
$$
We have by formula (\ref{t24b}) in Proposition \ref{tpr21}  
\begin{eqnarray*}
&&
\Delta= 
\frac 1 a\,\E\left(\left({Y}^{(\alpha)}_{at}-{Y}^{(\alpha)}_{as}\right)^2\right)
\\
&&
\hskip.5cm
=2\,\frac 1a\,\int_0^{a(t-s)}(a(t-s)-x)\, k_{\alpha,H}(x)\,dx
\\
&&
\hskip.5cm
\leq C\, (t-s)
\end{eqnarray*}
with, e.g., $C= 2\,\int_0^\infty \, k_{\alpha,H}(x)\,dx.$ This completes the proof.
\end{proof}
\vskip1cm
\noindent
{\bf Acknowledgement.} We thank Zhan Shi, Esko Valkeila and Marc Yor for discussions and comments on an early version of this paper.

\bibliographystyle{plain}
\bibliography{yor1}
\end{document}